\documentclass{amsart}
\usepackage{amssymb}
\usepackage{amsmath}
\usepackage{amsthm}
\usepackage{amsbsy}
\usepackage{bm}
\usepackage{tikz}
\usepackage{graphicx}
\newcommand{\cev}[1]{\reflectbox{\ensuremath{\vec{\reflectbox{\ensuremath{#1}}}}}}

\theoremstyle{plain}
\newtheorem{theorem}{Theorem}[section]
\newtheorem{lemma}[theorem]{Lemma}

\newtheorem{proposition}[theorem]{Proposition}

\theoremstyle{definition}
\newtheorem{definition}[theorem]{Definition}

\theoremstyle{plain}
\newtheorem{example}[theorem]{Example}

\theoremstyle{remark}
\newtheorem{remark}[theorem]{Remark}

\theoremstyle{remark}

\newtheorem*{question}{Question}

\begin{document}
\title{Filling of Closed Surfaces}
\author{Bidyut Sanki}
\address{Department of Mathematics\\
         Indian Institute of Science\\
         Bangalore 560012, India}
\email{bidyut.iitk7@gmail.com}
\date{\today}
\begin{abstract}
Let $F_g$ denote a closed oriented surface of genus $g$. A set of simple closed curves is called a filling of $F_g$ if its complement is a disjoint union of discs. The mapping class group $\text{Mod}(F_g)$ of genus $g$ acts on the set of fillings of $F_g$. The union of the curves in a filling forms a graph on the surface which is a so-called decorated fat graph. It is a fact that two fillings of $F_g$ are in the same $\text{Mod}(F_g)$-orbit if and only if the corresponding fat graphs are isomorphic. We prove that any filling of $F_2$ whose complement is a single disc (i.e., a so-called minimal filling) has either three or four closed curves and in each of these two cases, there is a unique such filling up to the action of $\text{Mod}(F_2)$.

We provide a constructive proof to show that the minimum number of discs in the complement of a filling pair of $F_2$ is two. Finally, given positive integers $g$ and $k$ with $(g, k)\neq (2, 1)$, we construct a filling pair of $F_g$ such that the complement is a union of $k$ topological discs.  
\end{abstract}

\maketitle

\section{Introduction}
Suppose $F_g$ is a closed oriented surface of genus $g$ and $X=\{\gamma_i\;|\;i=1, \dots, n\}$ is a nonempty collection  of simple closed curves on $F_g$ such that $i(\gamma_i, \gamma_j)=|\gamma_i\cap\gamma_j|$ for $i\neq j$, i.e., $\gamma_i$ and $\gamma_j$ are in minimal position. Here, $i(\alpha, \beta)$ denotes the geometric intersection number of the closed curves $\alpha$ and $\beta$ (see Section 1.2 in~\cite{FM}). The set $X$ is called a \emph{filling} of the surface $F_g$ if the complement of $\bigcup\limits_{i=1}^{n}\gamma_i$ in $F_g$ is a disjoint union of topological discs. If the complement is a single disc, then we say that $X$ is a \emph{minimal filling}. Further, if the number of curves in $X$ is two, then we say that $X$ is a \emph{filling pair}. The combinatorial complexity of a filling $X$ of $F_g$ ($g\geq 2$), denoted by $T_k(X)$, is the number of simple closed curves which intersect the union of curves in $X$ no more than $k$ times. In~\cite{TS} (Theorem 1.2), Aougab and Huang have proved that $T_1(\alpha, \beta)\leq 4g-2$ and equality holds if and only if $(\alpha, \beta)$ is a minimal filling pair of $F_g$. Fillings of surfaces are well studied in topology and geometry. It has been studied extensively in~\cite{FH},~\cite{SSP},~\cite{APP},~\cite{TS},~\cite{PJP},~\cite{T}. 

In~\cite{FH}, Fanoni and Parlier have studied fillings on punctured surfaces. The authors have shown that any filling set of curves on a surface $S$ with at least one puncture intersecting pairwise at most $k$ times has cardinality $N$, where $N$ is the smallest integer satisfying $kN(N-1)\geq 2|\chi(S)|$ (see Theorem 1, in ~\cite{FH}). Moreover, Fanoni and Parlier have proved the fact that if $\{\gamma_1, \dots, \gamma_M\}$ is a filling set of \emph{systoles} (shortest essential closed geodesics) of a hyperbolic surface of signature $(g, n)$ with systole length $l$, then $4lM\geq 2\pi(2g-1)+\pi(n-2)$ (see Theorem 4 in ~\cite{FH}). 

Schmutz has studied the function $\emph{syst}$ which maps a surface to the length of a systole. In \cite{SSP}, the author has proved that under some regularity conditions $syst$ is a topological Morse function which is invariant with respect to the action of mapping class group and descends to a proper function on moduli spaces. Any critical point of $syst$ has a filling set of systoles (see Theorem A in~\cite{SSP}). In~\cite{APP} (see Theorem 1.1), Anderson, Parlier and Pettet have constructed a sequence of surfaces $S_{g_k}$ of genera $g_k\to \infty$ with filling set of systoles and Bers constant $\geq 2\sqrt{g_k}$. In~\cite{PJP}, the authors have shown that the asymptotic growth rate for the minimal cardinality of simple closed curves on a closed surface of genus $g$ which fill and pairwise intersect no more that $k$ times is $2\sqrt{g}/k$ as $g\to \infty$ and the cardinality of a filling set of systoles is bounded from below by $g/\log(g)$.

Our motivation to study fillings of oriented closed surfaces is the following. The set of all hyperbolic structures on the surface $F_g\ (g \geq 2)$ up to isometry is called the moduli space of genus $g$ and is denoted by $\mathcal{M}_g$. It is a well known and difficult problem to construct a spine of $\mathcal{M}_g$, i.e., a deformation retract of $\mathcal{M}_g$ of minimal dimension. The subset of $\mathcal{M}_g$ consisting of all the surfaces whose systoles fill the surface is called $\mathit{Thurston\ set}$ which is denoted by $\mathcal{X}_g$. In~\cite{WT}, Thurston has proposed $\mathcal{X}_g$ as a candidate spine of $\mathcal{M}_g$ and provided a sketch of a proof that $\mathcal{X}_g$ is a deformation retract, but it is difficult to complete the proof. Moreover, the contractibility, connectivity and dimension of $\mathcal{X}_g$ remain open.

Let $\text{Mod}(F_g)$ denote the mapping class group of $F_g$, the group of all orientation-preserving homeomorphisms  up to isotopy (see Section 2.1 in~\cite{FM}). It is easy to see that if $X=\{\gamma_1, \dots, \gamma_n\}$ is a filling of the surface $F_g$ and $[f]\in \text{Mod}(F_g)$, then the set $[f]\cdot X= \{f\circ\gamma_1, \dots, f\circ\gamma_n\}$ is also a filling of $F_g$ with same number of components in the complement. Let $I(F_g)$ denote the set of all simple closed curves on $F_g$. There is a natural action of $\text{Mod}(F_g)$ on the quotient space $$\mathcal{P}_n(F_g):=I(F_g)^n/[(\gamma_1, \dots, \gamma_n)\sim (\gamma_{\sigma(1)}, \dots, \gamma_{\sigma(n)})],$$ where $\sigma$ is a permutation on $\{1,2,\dots, n\}$. 

From another point of view, we can think the union of the curves in $X$ as a connected graph on the surface which we denote by $G_X(F_g)$. This is a so-called \emph{fat graph} (also called a \emph{ribbon graph}, see Section 2.2 in~\cite{BE}), with all vertices of valence 4. It is a fact that:
\begin{theorem}\label{thm:1}
If $F_g$ is the closed oriented surface of genus $g$ and $X, Y\in \mathcal{P}_n(F_g)$ are two fillings of $F_g$, then they are in the same $\text{Mod}(F_g)$-orbit if and only if $G_X(F_g)$ and $G_Y(F_g)$ are isomorphic.
\end{theorem}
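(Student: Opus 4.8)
The plan is to prove both implications by relating a filling $X$ to the combinatorial data of the fat graph $G_X(F_g)$, namely its underlying 4-valent graph together with the cyclic orderings of half-edges at each vertex. The key observation is that, since $X$ is a filling, the surface $F_g$ is recovered from $G_X(F_g)$ by gluing a disc along each boundary cycle of the fat graph (the complementary regions of $X$), and this reconstruction is canonical once the fat graph structure is fixed.

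For the forward direction, suppose $X$ and $Y$ lie in the same $\text{Mod}(F_g)$-orbit, so there is an orientation-preserving homeomorphism $f\co F_g\to F_g$ with $f(X)=Y$ (as unordered collections of isotopy classes). First I would arrange, by an isotopy, that $f$ carries the union $\bigcup_i \gamma_i$ homeomorphically onto the union of the curves of $Y$; here one uses that each $\gamma_i$ and its image are in minimal position, so the bigon criterion lets us straighten $f$ on the 1-skeleton. Then $f$ restricted to $G_X(F_g)$ is a graph isomorphism onto $G_Y(F_g)$, and because $f$ is orientation preserving it respects the local cyclic orderings of half-edges coming from the orientation of $F_g$; hence it is an isomorphism of fat graphs.

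For the converse, suppose $\phi\co G_X(F_g)\to G_Y(F_g)$ is a fat graph isomorphism. Realize both fat graphs as regular neighborhoods $N_X\subset F_g$ and $N_Y\subset F_g$ of the respective curve unions; a fat graph isomorphism extends to an orientation-preserving homeomorphism $\Phi\co N_X\to N_Y$ sending each boundary curve of $N_X$ to a boundary curve of $N_Y$. Since $X$ and $Y$ are fillings, $F_g\sm N_X$ and $F_g\sm N_Y$ are each a disjoint union of discs, and the boundary circles matched up by $\Phi$ bound these discs on the two sides; extend $\Phi$ over each complementary disc by coning (any orientation-preserving homeomorphism of $S^1$ extends to the disc). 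This produces an orientation-preserving homeomorphism $F_g\to F_g$ carrying $X$ to $Y$, so they are in the same $\text{Mod}(F_g)$-orbit.

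The main obstacle I expect is the first step of the forward direction: upgrading an arbitrary homeomorphism $f$ with $f(X)=Y$ only up to isotopy to one that maps the 1-skeleton exactly, while simultaneously controlling that the complementary regions (not just the individual curves) are matched correctly. The curves are pairwise in minimal position, so repeatedly applying the bigon criterion removes excess intersections, but one must check that this isotopy can be done globally and coherently for the whole collection at once, and that no two distinct complementary discs get identified. A clean way around this is to avoid isotoping $f$ directly and instead argue purely combinatorially: an orientation-preserving homeomorphism induces an isomorphism of the cellulation of $F_g$ determined by $X$, hence of its dual data, which is exactly the decorated fat graph; this sidesteps the delicate point-set topology and is the route I would actually write up.
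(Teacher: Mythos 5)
Your proposal is correct and follows essentially the same strategy as the paper's (much terser) proof: restrict the homeomorphism to the curve union to obtain a fat graph isomorphism in one direction, and extend a fat graph isomorphism over the complementary discs to a homeomorphism of $F_g$ in the other. The extra care you take with the forward direction (straightening $f$ on the $1$-skeleton via minimal position, or passing to the induced cellulation) and with the converse (regular neighborhoods and coning over discs) fills in details the paper leaves implicit, but does not change the approach.
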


It follows from the Euler's equation that if $(\alpha, \beta)$ is a minimal filling of $F_g$, then $i(\alpha, \beta)=2g-1$. In~\cite{TS} (see Theorem 1.1), the authors have shown that for all $g\neq 2$, there exist minimal filling pairs of $F_g$ and for $g=2$, if $(\alpha, \beta)$ is a filling pair of $F_2$, then $i(\alpha, \beta)\geq 4$ (see Theorem 2.17 in~\cite{TS}).  Therefore, there is no minimal filling pair of $F_2$. In this situation, we have the following question.

\begin{question}
What is the minimum number of curves in a minimal filling of $F_2$?
\end{question}

The theorem stated below answers the question. Moreover, it says that there exists a unique mapping class group orbit of such fillings of $F_2$.
\begin{theorem}\label{thm:2}
\begin{enumerate}
\item There exists a unique $\text{Mod}(F_2)$-orbit of triple $\{\alpha, \beta, \gamma\}$ of curves filling $F_2$ minimally.
\item There exists a unique $\text{Mod}(F_2)$-orbit of quadruple $\{\alpha_i: i=1,\dots ,4\}$ filling $F_2$ minimally.
\end{enumerate}
\end{theorem}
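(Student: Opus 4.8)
The plan is to use Theorem~\ref{thm:1} to convert both assertions into a finite classification of fat graphs, and then to carry out that classification by direct inspection. Suppose $X=\{\gamma_1,\dots,\gamma_n\}$ is a minimal filling of $F_2$. Then $G_X(F_2)$ is a connected fat graph all of whose vertices are $4$-valent, and together with the single complementary disc it endows $F_2$ with a CW structure having exactly one $2$-cell; hence, writing $V$ and $E$ for the numbers of vertices and edges, $2E=4V$ and $V-E+1=\chi(F_2)=-2$, so $V=3$ and $E=6$. Conversely, any connected $4$-valent fat graph with three vertices (so $E=6$) and one boundary component has associated closed surface of Euler characteristic $3-6+1=-2$, i.e.\ $F_2$, and it carries the multicurve obtained by resolving each vertex ``straight through'' (pairing the two half-edges opposite in the cyclic order, as is forced at a transverse double point); this multicurve fills, its components are embedded, they are in pairwise minimal position since the unique complementary disc has $2E=12$ sides and so contains no bigon, and the components are \emph{simple} closed curves exactly when the two strands through each vertex lie on different components. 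Thus, by Theorem~\ref{thm:1}, $\mathrm{Mod}(F_2)$-orbits of minimal fillings with $n$ curves correspond bijectively to isomorphism classes of connected $4$-valent fat graphs with three vertices, one boundary component, and straight-ahead resolution consisting of $n$ simple closed curves.

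The next step is to list the possible combinatorial types. Since $G_X(F_2)$ is connected, so is the graph on the vertex set $\{\gamma_1,\dots,\gamma_n\}$ with an edge for each pair of intersecting curves; as $\sum_{i<j}i(\gamma_i,\gamma_j)=V=3$, this forces $n\le 4$. For $n=3$ one has either $i(\gamma_1,\gamma_2)=i(\gamma_2,\gamma_3)=i(\gamma_1,\gamma_3)=1$ or, after relabelling, $i(\gamma_1,\gamma_2)=2$, $i(\gamma_2,\gamma_3)=1$, $i(\gamma_1,\gamma_3)=0$. For $n=4$ the intersection graph is a tree on four vertices with every edge of multiplicity one, hence a path $\gamma_1 - \gamma_2 - \gamma_3 - \gamma_4$ or a star centred at $\gamma_1$. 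In each case the number of vertices through which a given curve runs is determined, and hence so is the underlying multigraph of $G_X(F_2)$: for the $(1,1,1)$ pattern it is a triangle with every edge doubled; for the $n=4$ star it is a triangle carrying one curve with a loop attached at each of its vertices; for the $n=4$ path it is two doubled edges in a line with a loop at each end; and so on. Finally, at every vertex the cyclic order must make the two strands alternate, which leaves exactly two admissible cyclic orders at each vertex.

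The heart of the argument is then an enumeration. For each combinatorial type above I would list, up to fat-graph isomorphism, the finitely many assignments of admissible cyclic orders to the vertices and, for each, trace the boundary cycles to count boundary components. The expected outcome is: for the $(1,1,1)$ pattern exactly one assignment, up to isomorphism, has a single boundary component, and this is the unique orbit of part~(1); the $(2,1,0)$ pattern yields no minimal filling, every admissible assignment producing more than one complementary disc; and among the two $n=4$ types exactly one, with a unique admissible cyclic-order assignment, has one boundary component, and this is the unique orbit of part~(2). Existence in (1) and (2) then follows immediately by exhibiting these two surviving fat graphs, together with their straight-ahead curves, explicitly on $F_2$.

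The step I expect to be the main obstacle is precisely this last enumeration: it must be carried out so that no fat-graph isomorphism is overlooked, since the relevant isomorphisms comprise the automorphisms of the underlying multigraph (rotations and reflections of the triangle, interchanges of parallel edges, permutations of the loops) composed with relabellings of the curves, and an over- or under-count would break uniqueness or existence respectively. The cleanest way to control this is to normalise the cyclic order at one chosen vertex, propagate the alternation constraint through the remaining vertices, and then compare the few surviving candidates; once a candidate is fixed, counting its boundary components is a routine face-tracing computation.
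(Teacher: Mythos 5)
Your reduction is exactly the paper's: pass to the $4$-valent fat graph $G_X(F_2)$, use the Euler characteristic to force $3$ vertices and $6$ edges, note that the standard cycles must be simple and must alternate at each vertex, and split into cases according to the edge-lengths of the standard cycles. Your intersection patterns $(1,1,1)$ and $(2,1,0)$ for $n=3$, and the star and path for $n=4$, correspond precisely to the paper's Case 1/Case 2 divisions (a curve meeting the others in $d$ points contributes a standard cycle of length $d$). So the skeleton is sound. The problem is that the entire content of the theorem lies in the step you defer --- listing the admissible cyclic-order assignments up to fat-graph isomorphism and tracing the boundary cycles --- and the one concrete prediction you commit to there is false. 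It is the $(2,1,0)$ pattern, not the $(1,1,1)$ pattern, that realizes the unique minimal triple. For the $(1,1,1)$ pattern (the doubled triangle, every standard cycle of length two) both admissible fat graphs have three boundary components, so capping off gives $\chi=3-6+3=0$, a torus, and this type contributes nothing on $F_2$; indeed three curves pairwise intersecting once are realized on the torus by the slopes $0,1,\infty$, which cut it into three triangles. For the $(2,1,0)$ pattern (standard cycles of lengths $3,2,1$) exactly one of the two admissible fat graphs has a single boundary component, and that one gives the unique orbit of part (1). Your prediction for part (2) is too vague to check since you do not say which of the two types survives (it is the path type, i.e.\ two length-two cycles and two loops; the star type always yields three boundary components).

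Concretely, then, there are two defects: the enumeration and face-tracing that you correctly identify as ``the main obstacle'' is not actually performed, and the asserted outcome of that enumeration for the triple case is the reverse of the truth, so the argument as written would exhibit a nonexistent filling and discard the one that exists. To complete the proof you must do what the paper does: for each combinatorial type write down the finitely many alternating fat structures up to isomorphism (normalizing at one vertex and quotienting by the multigraph symmetries, as you propose, leaves two candidates in each of the $n=3$ cases and one in each of the $n=4$ cases), and for each compute the orbits of $\sigma_\infty=\sigma_1 * \sigma_0^{-1}$ to count boundary components. Only the length-$(3,2,1)$ graph and the path-type quadruple graph have a single boundary component, and these give the two claimed orbits.
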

Now, we study filling pairs of $F_g$, $g\geq 2$. Let $(\alpha, \beta)$ be a filling pair of $F_g$. The number of disjoint topological discs in the complement $F_g\setminus (\alpha \cup \beta)$ is denoted by $K_g(\alpha,\beta)$. We define $$K(F_g)=\min\{K_g(\alpha, \beta) |\ (\alpha, \beta)\ \text{is a filling pair of}\ F_g\}.$$ In~\cite{TS}, Aougab has shown that $K(F_g)=1$ if $g\geq 3$ and there exists no minimal filling pair of $F_2$, i.e., $K(F_2)\geq 2.$ In ~\cite{TS} (Lemma 2.5),~\cite{FM} (Section 1.3.2), the authors have shown that there exists a filling pair $(\alpha, \beta)$ such that the complement is a union of four pairwise disjoint topological discs which implies $K(F_2)\leq 4.$ Hence, combining the inequalities, we have followed $2\leq K(F_2)\leq 4.$ We prove the following theorem which gives the exact value of $K(F_2)$.

\begin{theorem}\label{thm:3}
There exists a filling pair $(\alpha, \beta)$ of $F_2$ such that the complement is a disjoint union of two topological discs, and hence $K(F_2)=2.$
\end{theorem}

Now, for a positive integer $k$, we have the following question:

\begin{question} Does there exist a filling pair $(\alpha, \beta)$ of $F_g, g\geq 2$ such that $K_g(\alpha, \beta)=k$?
\end{question}

We prove the following result.

\begin{theorem}\label{thm:4}
For every $k\in \mathbb{N}$ and $g\geq 2$ with $(g, k) \neq (2,1)$, there exists a filling pair $(\alpha_k^g, \beta_k^g)$ of $F_g$ such that the complement $F_g\setminus (\alpha_k^g \cup \beta_k^g)$ is a disjoint union of $k$ topological discs.
\end{theorem}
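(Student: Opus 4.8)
The plan is to reduce the statement to a combinatorial construction via an Euler characteristic count. If $(\alpha,\beta)$ is a filling pair of $F_g$ in minimal position with $i(\alpha,\beta)=n$ and with $k$ complementary discs, then $\alpha\cup\beta$ is a connected $4$-valent graph on $F_g$ with $n$ vertices and $2n$ edges, so $n-2n+k=2-2g$, i.e. $n=2g-2+k$. Since the boundary of every complementary polygon alternates between arcs of $\alpha$ and arcs of $\beta$, each complementary region is an even-sided polygon, and by the bigon criterion $(\alpha,\beta)$ is in minimal position precisely when no complementary region is a bigon. Hence it suffices to exhibit, for every admissible pair $(g,k)$ (that is, $g\geq 2$, $k\geq 1$, $(g,k)\neq(2,1)$), two simple closed curves on $F_g$ crossing transversally at $n=2g-2+k$ points, cutting $F_g$ into exactly $k$ discs, with no bigon. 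I would build these configurations from a short list of explicit base examples together with two surgery moves.

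For the base examples I would take the minimal filling pair of $F_g$ from~\cite{TS} when $g\geq 3$ and $k=1$, and the filling pair of $F_2$ from Theorem~\ref{thm:3} when $g=2$ and $k=2$; to reach both parities of $k$ I would also draw one explicit filling pair realizing $(g,k)=(2,3)$ and, for each $g\geq 3$, one realizing $(g,k)=(g,2)$. The first move is a \emph{disc-splitting move}, taking a filling pair of $F_g$ with $k$ discs to one of $F_g$ with $k+2$ discs: it is a controlled modification inside a single complementary region introducing two new transverse crossings that lie on two distinct arcs of the opposite curve, cutting that region into three subregions, none of which is a bigon. The second move is a \emph{genus-stabilization move}, taking a filling pair of $F_g$ with $k$ discs to one of $F_{g+1}$ with $k$ discs, performed by attaching a handle inside a complementary region and rerouting short sub-arcs of $\alpha$ and of $\beta$ over it; this move in particular produces the $(g,2)$ family for $g\geq 3$ from the $(2,2)$ base example. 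Iterating the disc-splitting move from the base examples then reaches every admissible pair $(g,k)$, while $(2,1)$ is necessarily excluded because $F_2$ admits no minimal filling pair by~\cite{TS} (equivalently, by Theorem~\ref{thm:3}).

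The step I expect to be the main obstacle is verifying, after each move, that the two curves remain simple, that no complementary region is a bigon (so minimal position is preserved), and that every complementary region is a disc with the disc count changing exactly as claimed. Preserving minimal position is the delicate point: an ordinary finger move of $\alpha$ across $\beta$ always creates a bigon, so the local modifications must be arranged so that the two new crossings straddle two different arcs of the opposite curve and, in the genus-stabilization move, the region that would be a bigon instead carries the new handle and is therefore not an embedded disc. Once the local pictures are pinned down, these properties together with the precise list of new complementary polygons follow from a direct inspection combined with the Euler identity above, which completes the induction. The remaining steps—checking connectivity of each curve and of $\alpha\cup\beta$, and matching the genus and disc counts to the prescribed $(g,k)$—are then routine bookkeeping.
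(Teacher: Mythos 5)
Your reduction (Euler characteristic forces $i(\alpha,\beta)=2g-2+k$, minimal position is equivalent to the absence of complementary bigons, so the theorem amounts to exhibiting suitable configurations) is correct and matches the paper's starting point. But your route diverges from the paper's after that: the paper constructs, entirely in the language of fat graphs, explicit families $G^2_k$ (all $k\geq 2$) and $G^3_k$ (all $k\geq 1$), verifies the disc counts by computing the orbits of $\sigma_\infty=\sigma_1*\sigma_0^{-1}$, and then inducts on genus in steps of \emph{two} by forming a vertex connected sum with one fixed six-vertex graph $\Gamma$, tracking exactly how each boundary cycle is rewritten. You instead propose an induction on $k$ in steps of two (a disc-splitting surgery) together with a genus-plus-one stabilization, starting from a short list of base examples. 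The scheme is logically sound and would cover all admissible $(g,k)$, and it has the virtue of needing far fewer explicit constructions than the paper; the paper's method has the virtue that every verification is a finite permutation computation rather than an inspection of pictures.

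The gap is that the two moves that carry all the content of your argument are never actually constructed; they are only specified by the properties you need them to have. For the disc-splitting move this is not a formality. The obvious local modification (a finger of $\alpha$ pushed across $\beta$ and back) creates a bigon, as you note; your fix is to make the two new crossings land on two \emph{distinct} arcs of $\beta$, but an arc of $\alpha$ that leaves a complementary polygon through one $\beta$-side cannot in general re-enter it through a different $\beta$-side without travelling through neighbouring complementary regions, whose combinatorics depend on the configuration you are inducting from. So you must either give a single local picture that provably works inside \emph{every} complementary $2m$-gon with $m\geq 2$ (e.g. a move that encircles a vertex and crosses the two $\beta$-germs there), or restrict attention to a region whose shape you control along the whole induction; neither is done, and ``direct inspection'' of pictures that are not drawn is not a proof. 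The same applies to the genus stabilization: rerouting sub-arcs of $\alpha$ and $\beta$ over a new handle must add exactly two crossings (since $4$-valence forces $\Delta E=2\Delta V$ and $\Delta\chi=-2$ with $\Delta F=0$), and one must check that the handle region is genuinely cut into discs rather than leaving an annulus or a once-punctured torus in the complement. Finally, the base examples $(2,3)$ and $(g,2)$ are asserted but not exhibited (the latter you propose to obtain from the unproved stabilization move). Until these local models are written down and their complementary regions listed, the proof is a plan rather than an argument; the paper's Lemmas~\ref{lem:5.3} and~\ref{lem:5.4} and its $\Gamma$-gluing step are precisely the explicit data your proposal is missing.
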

\noindent \textbf{Organization of the paper:} This paper is organized as follows. In Section 2, we give an introduction to fat graphs. We describe the method of construction of the surface associated with a fat graph. Also, we state a lemma which computes the number of boundary components of the surface. This lemma is useful in the subsequent sections. In Section 3, we study the action of mapping class group on the set of fillings of $F_g$ and prove Theorem~\ref{thm:1}. In Section 4, we focus on the minimal fillings of $F_2$. First, we give an independent proof of the fact that there does not exist a minimal filling pair of $F_2.$ We conclude this section with a proof of Theorem~\ref{thm:2}. In Section 5, we study filling pairs of $F_g$ for $g\geq 2$. We prove  Theorem~\ref{thm:3} and Theorem~\ref{thm:4} in this section.

\noindent \textbf{Acknowledgement:} The author would like to thank to Prof. Siddhartha Gadgil for carefully reading through a draft of this work and for his many helpful suggestions. Also, thanks to the referee for several helpful comments and suggestions.


\section{Fat graphs}
Before going to the definition of a fat graph, we recall the definition of a graph. The following definition of a graph is not the standard one used in ordinary graph theory, but we can easily see that this definition is equivalent to the standard definition. 

\begin{definition}
A graph is a triple $G=(E, \sim, \sigma_1)$, where
\begin{enumerate}
\item $E$ is a finite, non-empty set.

\item $\sim$ is an equivalence relation on $E$.

\item $\sigma_1:E \to E$ is a fixed-point free involution.
\end{enumerate}
\end{definition}

The set $E$ is called the set of directed edges. The fixed-point free involution $\sigma_1$ maps a directed edge to its reverse edge, i.e., $$\sigma_1(\vec{e})=\cev{e},\;\;\textit{for all} \;\;  \vec{e}\in E$$ and hence the set $E_1=E/{\sigma_1}$ of all orbits of $\sigma_1$ is the set of all undirected edges. The equivalence relation $\sim$ is defined by following: $$\vec{e}_1 \sim \vec{e}_2 \Leftrightarrow \vec{e}_1,\ \vec{e}_2\ \text{have the same initial vertex.}$$ The set $V=E/{\sim}$ of all equivalence classes of $\sim$ is the set of vertices of the graph. If $\vec{e}\in E$ is a directed edge, then we say $\vec{e}$ is emanating or going out from the vertex $[\vec{e}]$, where $[\vec{e}]$ is the equivalence class of $\sim$ containing $\vec{e}$. For $v\in V$, the degree of $v$ is $deg(v)=|v|.$
 
\begin{definition}
A fat graph (ribbon graph) is a quadruple $G=(E, \sim, \sigma_1, \sigma_0)$, where
\begin{enumerate}
\item $(E, \sim, \sigma_1)$ is a graph.

\item $\sigma_0$ is a permutation on $E$ so that each cycle
corresponds to a cyclic order on the set of oriented edges
going out from a vertex.
\end{enumerate}
\end{definition}

\subsection{Surface associated with a fat graph}
We construct a topological surface with boundary corresponding to a fat graph $G$ as described below. We take a closed disc corresponding to each vertex and a rectangle corresponding to each edge. Then we identify the sides of the rectangles with the boundary of the discs according to the order of the edges incident to a vertex. The local picture at a vertex of degree four is given in Figure~\ref{fig:1}.
\begin{figure}[htbp]
\begin{center}
\begin{tikzpicture}
\draw (0,0) circle [radius=0.5]; \draw (0.45, 0.2) -- (1.5, 0.2); \draw (0.45, -0.2) -- (1.5, -0.2); \draw [->] (1.01, -0.2) -- (1, -0.2); \draw (-0.45, 0.2) -- (-1.5,0.2); \draw [<-] (1.01, 0.2) -- (1, 0.2); \draw [<-] (-1.01, -0.2) -- (-1, -0.2); \draw (-0.45, -0.2) -- (-1.5, -0.2);  \draw [->] (-1.01, 0.2) -- (-1, 0.2);

\draw (0.2, 0.45) --(0.2, 1.5);  \draw (-0.2, 0.45) -- (-0.2, 1.5); \draw (0.2, -0.45)-- (0.2, -1.5); \draw (-0.2, -0.45) -- (-0.2, -1.5); \draw [->] (0.2, 1.1) --(0.2, 1); \draw [<-] (-0.2, 1.1) --(-0.2, 1); \draw [->] (-0.2, -1.1) --(-0.2, -1); \draw [<-] (0.2, -1.1) --(0.2, -1);
\draw [->] [domain=20:275] plot ({0.3*cos(\x)}, {0.25*sin(\x)});

\draw [blue, fill] (-4, 0) circle [radius=0.04]; \draw (-3,0)-- (-5, 0); \draw (-4, 1)-- (-4, -1); \draw [->] [domain=20:215] plot ({-4+0.4*cos(\x)}, {0.35*sin(\x)});
\end{tikzpicture}
\end{center}
\caption{Fat graph locally at a vertex of degree 4.}\label{fig:1}
\end{figure}
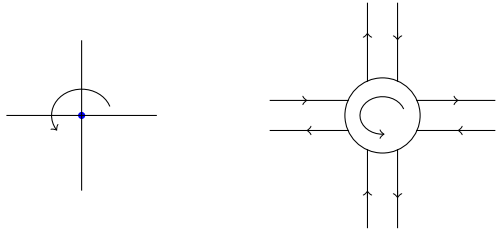

In this way, we obtain the oriented topological surface, denoted by $\Sigma(G)$, corresponding to a given fat graph $G$. Thus, we can talk about the number of boundary components, genus and many other topological notions of a fat graph. We define $\sigma_{\infty}:= \sigma_1 * \sigma_0^{-1}$ and denote the set of orbits of $\sigma_{\infty}$ by $E_{\infty}$.

\begin{lemma}\label{lem:0}
Given a fat graph $G=(E, \sim, \sigma_1, \sigma_0)$, the number of boundary components of the surface $\Sigma(G)$ is the number of orbits of $\sigma_{\infty}$.
\end{lemma}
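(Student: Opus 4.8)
The plan is to track how the boundary of the surface $\Sigma(G)$ is built up from the boundary arcs contributed by the discs and rectangles, and to show that gluing edges together makes these arcs concatenate exactly according to the permutation $\sigma_\infty = \sigma_1 * \sigma_0^{-1}$. First I would set up the local picture: to each vertex $v$ we attach a closed disc $D_v$, and to each unoriented edge $\{e,\sigma_1(e)\}$ we attach a rectangle $R_e$ with two ``long'' sides (which will lie on the final boundary) and two ``short'' sides (which get glued into the discs). The cyclic order $\sigma_0$ prescribes, along $\partial D_v$, the cyclic sequence in which the short sides of the incident rectangles are attached; between two consecutive attached short sides there is a small boundary arc of $D_v$. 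Thus $\partial \Sigma(G)$ is, as a set, a disjoint union of these ``disc arcs'' and the long sides of the rectangles, and to count boundary components I need to understand which of these arcs are joined end-to-end.

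The key step is a local computation at each endpoint of a rectangle's long side. Fix an oriented edge $e$ going out of a vertex $v$; it corresponds to one short side of $R_e$ attached to $\partial D_v$, and that short side has two endpoints, each shared with a disc arc of $D_v$. Following one of the two long sides of $R_e$ away from $v$, it ends at the short side attached to the vertex at the other end of $e$, i.e.\ the vertex of $\sigma_1(e)$. I would show that the next disc arc one meets, continuing along the boundary, is precisely the arc between the short side of $\sigma_1(e)$ and the short side of the edge $\sigma_0^{-1}(\sigma_1(e))$ — because $\sigma_0$ gives the cyclic order of outgoing edges at that vertex, and traversing the boundary of the disc in the correct (say, counterclockwise) orientation moves us from one incident short side to the previous one in the $\sigma_0$-cycle. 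Chaining this together, the boundary component through $e$ visits the oriented edges $e,\ \sigma_\infty(e),\ \sigma_\infty^2(e),\dots$, so boundary components are in bijection with orbits of $\sigma_\infty$ on $E$.

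To organize this cleanly I would (i) orient $\Sigma(G)$ and fix the induced boundary orientation; (ii) label, for each oriented edge $e$, the specific long-side-of-rectangle-plus-adjacent-disc-arc segment it contributes, call it $s_e$; (iii) prove the concatenation rule ``the segment following $s_e$ is $s_{\sigma_\infty(e)}$'' by the local analysis above; and (iv) conclude that $|E_\infty|$ equals the number of boundary circles. The main obstacle — really the only subtle point — is getting the bookkeeping of orientations exactly right in step (iii): one must be careful whether the relevant permutation is $\sigma_1 * \sigma_0^{-1}$ or $\sigma_1 * \sigma_0$, and this depends on the convention (clockwise vs.\ counterclockwise) used both for the cyclic order $\sigma_0$ at a vertex and for reading off the boundary of each disc. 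Once the conventions are pinned down consistently, the identification with $\sigma_\infty$-orbits is forced, and the lemma follows. A small auxiliary check is that the construction indeed yields a closed surface with boundary (no non-manifold points), which is immediate since every gluing identifies a short side of a rectangle with an arc of a disc boundary in a one-to-one fashion.
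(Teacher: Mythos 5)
The paper does not actually prove this lemma: it is stated as a known fact about ribbon graphs and only illustrated on Example~2.5, so there is no proof of record to compare against. Your outline is the standard argument and the right one --- decompose $\partial\Sigma(G)$ into the long sides of the edge-rectangles together with the disc arcs between consecutive attached short sides, and show that the boundary orientation concatenates the segment of an oriented edge $e$ with that of $\sigma_\infty(e)$, so that boundary circles biject with $\sigma_\infty$-orbits. The only caveat is that your step (iii), the local concatenation rule, is where all the content of the lemma sits, and you describe it rather than carry it out: until you fix an orientation convention for $\sigma_0$ and for the induced boundary orientation and actually trace the picture at one vertex, the argument cannot distinguish $\sigma_1*\sigma_0^{-1}$ from $\sigma_1*\sigma_0$ (both give the same \emph{number} of orbits only coincidentally in small examples, not in general --- e.g.\ the paper's Example~2.5 versus the remark following it shows the two choices give $3$ versus $1$ boundary components on the same underlying graph). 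You correctly identify this as the one subtle point, so the plan is sound; to make it a complete proof you must pin down the conventions and perform that single local computation explicitly, after which the chaining argument and the bijection with $E_\infty$ are immediate.
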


\begin{example}\label{eg:1}
\noindent Let us consider the fat graph $G=(E, \sim, \sigma_1, \sigma_0)$ (see Figure~\ref{fig:2}), where
\begin{enumerate}
\item $E=\{\vec{e}_i, \cev{e}_i|\ i=1,2,3 \}$ is the set of directed edges.

\item $\sim$ is uniquely determined by the partition $\left\{\{\vec{e_1}, \vec{e_2}, \vec{e_3}\},\{\cev{e}_1, \cev{e}_2, \cev{e}_3 \}\right\}$  of $E$.

\item The involution $\sigma_1:E \to E $ is defined by $\sigma_1(\vec{e}_i)=\cev{e}_i,\; i=1,2,3.$

\item The permutation $\sigma_0: E\to E$ is given by $\sigma_0=(\vec{e}_1, \vec{e}_3, \vec{e}_2)(\cev{e}_1, \cev{e}_2, \cev{e}_3).$
\end{enumerate}

\noindent Therefore, we have $\sigma_{\infty} = (\vec{e}_1, \cev{e}_2)(\vec{e}_3, \cev{e}_1)(\vec{e}_2, \cev{e}_3)$ and hence  the number of boundary components of $G$ is three (Figure~\ref{fig:2}, Lemma~\ref{lem:0}).

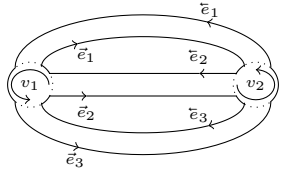
\begin{figure}[htbp]
\begin{center}
\begin{tikzpicture}
\draw [domain= 30:60] plot ({-1.5+0.3*cos(\x)}, {0.3*sin(\x)}); \draw [domain=60:130, dotted] plot ({-1.5+0.3*cos(\x)}, {0.3*sin(\x)}); \draw [domain=130:230] plot ({-1.5+0.3*cos(\x)}, {0.3*sin(\x)}); \draw [domain= 230:300, dotted] plot ({-1.5+0.3*cos(\x)}, {0.3*sin(\x)});\draw [domain= 300:330] plot ({-1.5+0.3*cos(\x)}, {0.3*sin(\x)}); \draw [domain= 330:400, dotted] plot ({-1.5+0.3*cos(\x)}, {0.3*sin(\x)}); \draw [domain= 0:270, ->] plot ({-1.5+0.25*cos(\x)}, {0.2*sin(\x)});

\draw [domain= 50:120, dotted] plot ({1.5+0.3*cos(\x)}, {0.3*sin(\x)}); \draw [domain=120:150] plot ({1.5+0.3*cos(\x)}, {0.3*sin(\x)}); \draw [domain=150:210, dotted] plot ({1.5+0.3*cos(\x)}, {0.3*sin(\x)}); \draw [domain=210:240] plot ({1.5+0.3*cos(\x)}, {0.3*sin(\x)}); \draw [domain=240:310, dotted] plot ({1.5+0.3*cos(\x)}, {0.3*sin(\x)}); \draw [domain=310:410] plot ({1.5+0.3*cos(\x)}, {0.3*sin(\x)}); \draw [domain= 180:450, ->] plot ({1.5+0.25*cos(\x)}, {0.2*sin(\x)});

\draw [->] (1.25, 0.15) --  (0.75, 0.15)node [above] {{\tiny $\cev{e}_2$}}; \draw (0.75, 0.15) -- (-1.25, 0.15); \draw [->](-1.25, -0.15) -- (-0.75, -0.15)node [below] {{\tiny $\vec{e}_2$}}; \draw (-0.75, -0.15) -- (1.25, -0.15);
\draw [domain = 0:60, ->] plot ({1.7*cos(\x)}, {0.23+0.7*sin(\x)}); \draw [domain = 60:180] plot ({1.7*cos(\x)}, {0.23+0.7*sin(\x)}); \draw (0.9, 1) node {{\tiny $\cev{e}_1$}};

 \draw [domain = 180:240, ->] plot ({1.7*cos(\x)}, {-0.23+0.7*sin(\x)}); \draw [domain = 240:360] plot ({1.7*cos(\x)}, {-0.23+0.7*sin(\x)}); \draw (-0.9, -1) node {{\tiny $\vec{e}_3$}};

\draw [domain = 0:130] plot ({1.35*cos(\x)}, {0.24+0.4*sin(\x)}); \draw [domain = 130:180, <-] plot ({1.35*cos(\x)}, {0.24+0.4*sin(\x)}); \draw (-0.75, 0.4) node {{\tiny $\vec{e}_1$}}; 

\draw [domain = 0:50, ->] plot ({1.35*cos(\x)}, {-0.24-0.4*sin(\x)}); \draw [domain = 50:180] plot ({1.35*cos(\x)}, {-0.24-0.4*sin(\x)}); \draw (0.75, -0.4) node {{\tiny $\cev{e}_3$}}; 

\draw (1.5, 0)node {{\tiny $v_2$}}; \draw (-1.5, 0)node {{\tiny $ v_1$}};
\end{tikzpicture}
\end{center}
\caption{The fat graph $G$.}\label{fig:2}
\end{figure}

\end{example}

\begin{remark}
In Example~\ref{eg:1}, if we consider $\bar{\sigma}_0=(\vec{e}_1, \vec{e}_2, \vec{e}_3)(\cev{e}_1, \cev{e}_2, \cev{e}_3)$ instead of $\sigma_0$, then the number of boundary components of the new fat graph is one as $$\bar{\sigma}_{\infty}=\sigma_1*\bar{\sigma}_0^{-1}=(\vec{e}_1, \cev{e}_3, \vec{e}_2, \cev{e}_1, \vec{e}_3, \cev{e}_2).$$ Therefore, we can conclude, on an ordinary graph we can have different fat graph structures.
\end{remark}

\begin{definition}
A fat graph is called decorated if the degree of each vertex is even and at least 4.
\end{definition}

\begin{definition}
A simple cycle in a decorated fat graph is called standard cycle if every two consecutive edges in the cycle are opposite to each other in the cyclic order on the set of edges incident at their common vertex. If a cycle is not standard, we call it as non-standard.
\end{definition}

\begin{definition}\label{def:iso}
Let $G=(E, \sim, \sigma_1, \sigma_0)$ and $G'=(E', \sim', \sigma'_1, \sigma'_0)$ be two fat graphs. $G$ and $G'$ are said to be isomorphic if there exists a bijective function $f: E\to E'$ such that: 
\begin{enumerate}
\item For $x_1, x_2 \in E$, $x_1\sim x_2\  \text{if and only if}\ f(x_1)\sim' f(x_2).$

\item The following diagram commutes:

\begin{center}
\begin{tikzpicture}
\draw (-1.3, 0.7)node {$E$}; \draw (-1.3, -0.7)node {$E$};\draw (1.3, 0.7)node {$E'$};\draw (1.3, -0.7)node {$E'$}; \draw [->] (-0.9,0.7)--(0.9, 0.7); \draw (0, 0.7) node [above] {$f$}; \draw [->] (-0.9,-0.7)--(0.9, -0.7); \draw (0, -0.7) node [below] {$f$}; \draw [->] (-1.3, 0.45) -- (-1.3, -0.45); \draw (-1.3,0)node [left] {$\sigma_1$}; \draw [->] (1.3, 0.45) -- (1.3, -0.45); \draw (1.3,0)node [right] {$\sigma_1'$};
\draw [<-, domain=-20:160] plot ({0.3*cos(\x)},{0.3*sin(\x)});
\end{tikzpicture}
\end{center}
\item $(x_1, x_2,\dots, x_n)$ is the cyclic order on the set of edges going out from $v=\{x_1, x_2,\dots, x_n\}$ if and only if $(f(x_1), f(x_2),\dots, f(x_n))$ is the cyclic order on the set of edges going out from $v'=\{f(x_1), f(x_2),\dots, f(x_n)\}$. 
\end{enumerate}
\end{definition}

\begin{remark}
Condition (3) in Definition~\ref{def:iso} is as the following commutative diagram.
\begin{center}
\begin{tikzpicture}
\draw (-1.3, 0.7)node {$E$}; \draw (-1.3, -0.7)node {$E$};\draw (1.3, 0.7)node {$E'$};\draw (1.3, -0.7)node {$E$}; \draw [->](-0.9,0.7)--(0.9, 0.7); \draw (0, 0.7) node [above] {$f$}; \draw [->] (-0.9,-0.7)--(0.9, -0.7); \draw (0, -0.7) node [below] {$f$}; \draw [->] (-1.3, 0.4) -- (-1.3, -0.4); \draw (-1.3,0)node [left] {$\sigma_0$}; \draw [->] (1.3, 0.4) -- (1.3, -0.4); \draw (1.3,0)node [right] {$\sigma_0'$};
\draw [<-, domain=-20:160] plot ({0.3*cos(\x)},{0.3*sin(\x)});
\end{tikzpicture}
\end{center}
\end{remark}


\section{Mapping class group orbits of fillings}
Let $I(F_g)$ denote the set of all isotopy classes of simple closed curves on $F_g.$ For $n\in \mathbb{N}$, $I(F_g)^n$ denotes the Cartesian product of $n$ copies of $I(F_g$). We define a relation $\sim$ on $I(F_g)^n$ by $(\gamma_1, \gamma_2, \dots, \gamma_n)\sim (\gamma_{\sigma(1)}, \gamma_{\sigma(2)}, \dots, \gamma_{\sigma(n)})$, where $\sigma$ is a permutation on $\{1,2, \dots, n\}$. The mapping class group $\text{Mod}(F_g)$ acts on $I(F_g)^n$ by $[f]\cdot (\gamma_1, \gamma_2, \dots, \gamma_n) = (f\circ\gamma_1,  \dots, f\circ\gamma_n).$ This action descends to an action on the quotient space $\mathcal{P}_n(F_g):=I(F_g)^n/{\sim}.$ Let $X=(\alpha_1, \cdots, \alpha_n)\in \mathcal{P}_n(F_g)$ be a filling of $F_g$ and $[f] \in \text{Mod}(F_g)$, then $[f]\cdot X=(f\circ\alpha_1, \cdots, f\circ\alpha_n)$ is also a filling of $F_g$ with the same number of topological discs in the complement.

\noindent Suppose $X=\{\alpha_1, \dots, \alpha_n\}$ is a filling of $F_g$. Then we can think of the union of the curves in $X$ as a 4-regular decorated fat graph denoted by $G_X(F_g)$ which is described below.
\begin{enumerate}
\item The intersection points $\alpha_i \cap \alpha_j,i\neq j \in \{1,\dots, n\}$ are the vertices.
\item  The sub-arcs of $\alpha_i$'s joining the vertices are the edges.
\item The cyclic order on the set of edges incident at each vertex is uniquely determined by the orientation of the surface. 
\end{enumerate}
Conversely, suppose $G$ is a given decorated 4-regular fat graph with standard cycles $C_i,\;i= 1, \dots, n$. We obtain the closed surface $F(G)$ by capping each boundary component of $\Sigma(G)$ by a topological disc. Then $X_G=\{C_i|i=1, \dots, n\}$ is a filling of $F(G)$.

\begin{proof}[Proof of Theorem~\ref{thm:1}]
Suppose $X$ and $Y$ are in the same $\text{Mod}(F_g)$-orbit which implies that there exists a mapping class $[f] \in \text{Mod}(F_g)$ such that $Y=f\!\cdot\! X$. The restriction $\tilde{f}=f|_{G_X(F_g)}$ of the homeomorphism $f: F_g \to F_g$  gives a fat graph isomorphism $$\tilde{f}: G_X(F_g) \to G_Y(F_g).$$
Conversely, if $\tilde{f}:G_X(F_g) \to G_Y(F_g)$ is an isomorphism, then it can be extended to a homeomorphism $f: F_g \to F_g$ such that $Y=f\cdot X$ which implies that $X$ and $Y$ are in the same $\text{Mod}(F_g)$-orbit.
\end{proof}


\section{Fillings of $F_2$}
\begin{lemma}\label{lem1}
Let G be a $4$-regular decorated fat graph with three vertices and two standard cycles. Then the number of boundary components in $G$ is at least two.
\end{lemma}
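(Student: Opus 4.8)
The plan is to set up the combinatorics explicitly. A $4$-valent decorated fat graph $G$ with three vertices has $6$ edges (each vertex has valence $4$, so $\sum \deg(v) = 12 = 2|E_1|$). Since there are two standard cycles $C_1, C_2$ coming from an underlying filling picture, each cycle is a closed loop passing through the vertices, and the $6$ edges partition into two groups according to which cycle they belong to. I would first enumerate, up to fat-graph isomorphism, the possible ways the two cycles can be routed through three $4$-valent vertices: each vertex is a transverse crossing of the two cycles (two edge-germs from $C_1$, two from $C_2$, in alternating cyclic order, which is forced by the decorated/$4$-regular filling structure), so the underlying data is essentially how the strands of $C_1$ and $C_2$ are glued. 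A convenient bookkeeping device is to record, for each cycle, the cyclic sequence of vertices it visits; with three vertices and the constraint that every vertex lies on both cycles, there are only a handful of combinatorial types.

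Next, for each combinatorial type I would write down $\sigma_0$ (the vertex rotation system) and $\sigma_1$ (the edge involution) concretely, compute $\sigma_\infty = \sigma_1 * \sigma_0^{-1}$, and count its orbits, invoking Lemma~\ref{lem:0} to read off the number of boundary components. The goal is to check in each case that the number of orbits of $\sigma_\infty$ is at least $2$. Equivalently, one can use the Euler characteristic bookkeeping: $\chi(\Sigma(G)) = V - E = 3 - 6 = -3$, and if $b$ is the number of boundary components then the genus $h$ of $\Sigma(G)$ satisfies $2 - 2h - b = -3$, i.e. $b = 5 - 2h$, so $b$ is odd and $b \ge 1$; ruling out $b = 1$ is the crux. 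To exclude $b=1$: if the single boundary component, capped off, gave a closed surface, then $\Sigma(G)$ would be a one-holed genus-$2$ surface, and the two cycles $C_1, C_2$ would form a filling pair of $F_2$ with connected complement — but that contradicts the fact (to be used, or reproved in this section as the excerpt promises) that $F_2$ admits no minimal filling pair, i.e. $K(F_2) \ge 2$. Since this section gives an independent fat-graph proof of that non-existence, the cleanest route is to either cite that companion result or make the argument self-contained by the direct $\sigma_\infty$ computation.

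The main obstacle I anticipate is the case analysis: making sure the enumeration of fat-graph types with three $4$-valent vertices and exactly two standard cycles is genuinely exhaustive and that no isomorphism classes are double-counted or missed. The alternating condition at each vertex (strands of $C_1$ and $C_2$ interleave in the cyclic order) cuts down the possibilities substantially, and the $\sigma_\infty$-orbit computation is then mechanical for each survivor; but writing the enumeration carefully — perhaps by fixing a base vertex and tracking how $C_1$ threads through the remaining two vertices, then doing likewise for $C_2$ subject to compatibility — is where the real work lies. A secondary subtlety is confirming that "two standard cycles" forces each cycle to actually pass through all three vertices (a vertex off one of the cycles is impossible since every vertex is an intersection point of the two curves in the filling), which pins down the structure and is what ultimately forces $b \ge 2$.
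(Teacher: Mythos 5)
Your primary route---enumerating the finitely many $4$-valent decorated fat graphs with three vertices and two (simple, three-edge) standard cycles up to isomorphism, then counting orbits of $\sigma_\infty$ for each---is exactly the paper's proof: it exhibits three isomorphism classes $H_1, H_2, H_3$ and checks that each has three boundary components. Two cautions, though: you have only sketched the enumeration, which is the entire content of the proof, and your proposed shortcut via the non-existence of a minimal filling pair of $F_2$ is circular in this paper, since that non-existence is deduced as a corollary of this very lemma (and citing it from elsewhere would defeat the section's stated aim of giving an independent fat-graph proof). Your parity observation $b = 5 - 2h$ is a pleasant supplement, but it does not by itself exclude $b=1$, so the case analysis cannot be avoided.
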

\begin{proof}
Let $C_i,\; i=1,2,$ be the standard cycles of $G$. Then each $C_i$ is simple and consists of three edges. There are three such $4$-regular fat graphs (up to isomorphism) with three vertices and two standard cycles which are denoted by $H_j,\; j=1,2,3$ (see Figure~\ref{fig:3}). For each $j\in \{1,2,3\}$, the graph $H_j$ has three boundary components.

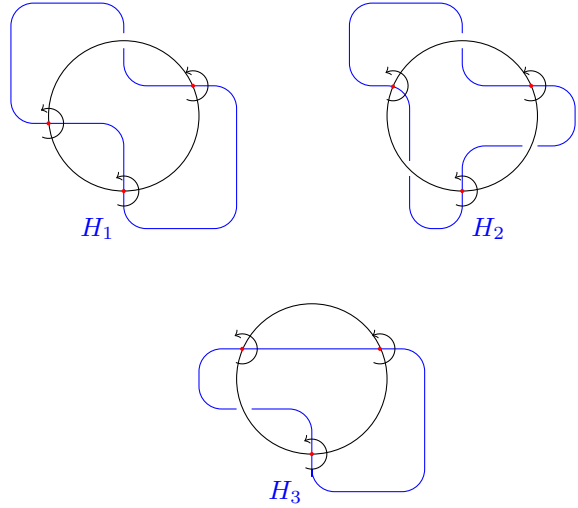
\begin{figure}[htbp]
\begin{center}
\begin{tikzpicture}
\draw (-2,0) circle [radius =1];
\draw [blue, rounded corners=3mm](-2, 0.9) -- (-2, 0.4) -- (-0.5, 0.4) -- (-0.5, -1.5) -- (-2, -1.5)node [left] {$H_1$} -- (-2, -0.1) -- (-3.5, -0.1) -- (-3.5, 1.5) -- (-2, 1.5) -- (-2, 1.1); \draw [fill, red] (-3, -0.1) circle [radius=0.02]; \draw [fill, red] (-2, -1) circle [radius=0.02]; \draw [fill, red] (-1.08, 0.4) circle [radius=0.02];
\draw [->, domain=245:480] plot ({-2+0.2*cos(\x)}, {-1+0.2*sin(\x)}); \draw [->, domain=245:480] plot ({-3+0.2*cos(\x)}, {-0.1+0.2*sin(\x)}); \draw [->, domain=245:480] plot ({-1.08+0.2*cos(\x)}, {0.4+0.2*sin(\x)});

\draw (2.5,0) circle [radius =1];
\draw [blue, rounded corners=3mm] (2.5, 0.9) -- (2.5, 0.4) -- (4, 0.4) -- (4, -0.4) -- (3.5, -0.4); \draw [blue, rounded corners=3mm] (3.3, -0.4) -- (2.5, -0.4) -- (2.5, -1.5)node [right] {$H_2$} -- (1.8, -1.5) -- (1.8, -0.8); \draw [blue, rounded corners=3mm] (1.8, -0.6) -- (1.8, 0.4) -- (1, 0.4) -- (1, 1.5) -- (2.5, 1.5) -- (2.5, 1.1);
 \draw [fill, red] (2.5, -1) circle [radius=0.02];  \draw [fill, red] (3.415, 0.4) circle [radius=0.02]; \draw [fill, red] (1.58, 0.39) circle [radius=0.02];
\draw [->, domain=245:480] plot ({2.5+0.2*cos(\x)}, {-1+0.2*sin(\x)});\draw [->, domain=245:480] plot ({3.41+0.2*cos(\x)}, {0.4+0.2*sin(\x)});\draw [->, domain=245:480] plot ({1.58+0.2*cos(\x)}, {0.4+0.2*sin(\x)});

\draw (-2+2.5,-3.5) circle [radius =1];
\draw [blue, rounded corners=3mm] (-2+1.7, -3.9) -- (-2+2.5, -3.9) -- (-2+2.5, -4.5) -- (-2+2.5, -5)node [left] {$H_3$} -- (-2+4, -5) -- (-2+4, -3.1) -- (-2+1, -3.1) -- (-2+1, -3.9) -- (-2+1.5, -3.9);  \draw [fill, red] (-2+3.405, -3.1) circle [radius=0.02];  \draw [fill, red] (-2+1.575, -3.1) circle [radius=0.02];  \draw [fill, red] (-2+2.5, -4.5) circle [radius=0.02];

\draw [->, domain=245:480] plot ({-2+2.5+0.2*cos(\x)}, {-4.5+0.2*sin(\x)});\draw [->, domain=245:480] plot ({-2+1.575+0.2*cos(\x)}, {-3.1+0.2*sin(\x)});\draw [->, domain=245:480] plot ({-2+3.405+0.2*cos(\x)}, {-3.1+0.2*sin(\x)});

\end{tikzpicture}
\end{center}
\caption{The graphs $H_j, j=1,2,3.$}\label{fig:3}
\end{figure}

\end{proof}

\begin{proposition}
There exists no minimal filling pair of $F_2$. 
\end{proposition}
\begin{proof}
Suppose there is a minimal filling pair $(\alpha, \beta)$ of $F_2$. Then, we define $G:=\alpha \cup \beta.$ We regard $G$ as a decorated fat graph where the intersection points of $\alpha$ and $\beta$ are the vertices and the sub-segments of $\alpha$ and $\beta$ joining two vertices are the edges. The cyclic order on the set of edges incident at each vertex is uniquely determined by the orientation of the surface. 

In another way, we can think $G$ as the 1-skeleton of a cellular decomposition of $F_2$. In the cell decomposition, the number of 0-cells is $i(\alpha, \beta)$. The valency condition implies that the number of 1-cells is $2i(\alpha, \beta)$ and from the minimality condition, we have the number of 2-cells is $1$. Therefore, the Euler characteristic argument implies that $i(\alpha, \beta)= 3.$ Hence, $G$ is a 4-regular decorated fat graph with three vertices, two standard cycles and a single boundary component which contradicts Lemma~\ref{lem1}.
\end{proof}

\begin{proof}[Proof of Theorem~\ref{thm:2}]
\textbf{(1) Filling triple.} Suppose $\{\alpha, \beta, \gamma\}$ is a minimal filling triple of $F_2.$ We define $G:=\alpha \cup \beta \cup  \gamma.$ Then G is a 4-regular decorated fat graph on $F_2$ with three standard cycles $\alpha, \beta$ and $\gamma$. In another point of view, $G$ is the 1-skeleton of a cellular decomposition of $F_2$. The minimality of the filling, Euler characteristic argument and regularity of the graph imply that the number of vertices and edges are $3$ and $6$ respectively. Therefore, to prove the theorem it suffices to prove that there exists a unique $4$-regular decorated fat graph $G$ with  three vertices, three standard cycles and a single boundary component.

\noindent Let $C_i,\; i=1,2,3$, be the standard cycles of $G$. There are two cases to be considered.

\begin{large}
\noindent Case 1.
\end{large}
In this case, we consider that each standard cycle $C_i$ consists of two edges. Up to isomorphism, there are only two distinct such fat graphs $H_1$ and $H_2$ which are given in Figure~\ref{fig:4}.
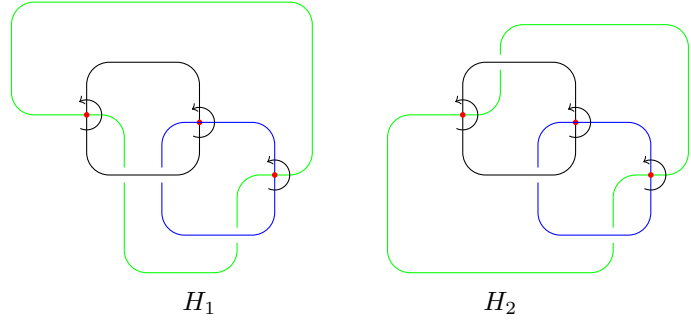
\begin{figure}[htbp]
\begin{center}
\begin{tikzpicture}
\draw [rounded corners=3mm] (0, 0) -- (1.5, 0) -- (1.5, 1.5) -- (0, 1.5)--cycle;
\draw [red, fill] (1.5, 0.7) circle [radius=0.03];
\draw [blue, rounded corners=3mm] (1, -0.1) -- (1, -0.8) -- (2.5, -0.8) -- (2.5, 0.7)--(1,0.7)-- (1, 0.1); 
\draw [green, rounded corners=3mm] (0.5, 0.1) -- (0.5, 0.8) -- (-1, 0.8) -- (-1, 2.3) -- (3, 2.3) -- (3, 0) -- (2,0)-- (2, -0.7); \draw [green, rounded corners=3mm] (2, -0.9) -- (2, -1.3) -- (0.5, -1.3) -- (0.5, -0.1);
\draw [red, fill] (0, 0.8) circle [radius=0.03];
\draw [red, fill] ( 2.5, 0) circle [radius=0.03];
\draw [->, domain=245:480] plot ({0.2*cos(\x)}, {0.8+0.2*sin(\x)});
\draw [->, domain=245:480] plot ({2.5+0.2*cos(\x)}, {0.2*sin(\x)});
\draw [->, domain=245:480] plot ({1.5+0.2*cos(\x)}, {0.7+0.2*sin(\x)});

\draw [rounded corners=3mm] (5, 0) -- (6.5, 0) -- (6.5, 1.5) -- (5, 1.5)--cycle;
\draw [red, fill] (4+2.5, 0.7) circle [radius=0.03];
\draw [blue, rounded corners=3mm] (4+2, -0.1) -- (4+2, -0.8) -- (7.5, -0.8) -- (7.5, 0.7)--(6,0.7)-- (6, 0.1);
\draw [green, rounded corners=3mm] (5.5, 1.4) -- (5.5, 0.8) -- (4,0.8) -- (4, -1.3) -- (7, -1.3) -- (7, -0.9); \draw [green, rounded corners=3mm] (7, -0.7) -- (7, 0) -- (8, 0) -- (8, 2) -- (5.5, 2) -- (5.5, 1.6);
\draw [red, fill] (5, 0.8) circle [radius=0.03];
\draw [red, fill] (7.5, 0) circle [radius=0.03];

\draw (1.5, -1.7) node {$H_1$}; \draw (5.5, -1.7) node {$H_2$};
\draw [->, domain=245:480] plot ({4+2.5+0.2*cos(\x)}, {0.7+0.2*sin(\x)});
\draw [->, domain=245:480] plot ({5+0.2*cos(\x)}, {0.8+0.2*sin(\x)}); \draw [->, domain=245:480] plot ({7.5+0.2*cos(\x)}, {+0.2*sin(\x)});

\end{tikzpicture}
\end{center}
\caption{The graphs $H_j, j=1,2.$}\label{fig:4}
\end{figure}
The fat graph $H_j$, for each $j=1,2$, has three boundary components . Therefore, this case is not possible.

\begin{large}
\noindent Case 2.
\end{large}
In this case, we assume that there is a cycle $C_i$ which consists of three edges. Without loss of generality, we assume that $C_1$ consists of three edges. The Euler characteristic argument implies that other two standard cycles $C_2$ and $C_3$ consist of two edges and one edge respectively. The similar argument as in Case 1 follows that up to isomorphism there are two distinct such fat graphs $\Gamma_1$ and $\Gamma_2$ which are given in Figure~\ref{fig:5}.
\begin{figure}[htbp]
\begin{center}
\begin{tikzpicture}
\draw [rounded corners=3mm] (-1, 0) -- (1.5, 0) -- (1.5, 1.5) -- (-1, 1.5)--cycle;
\draw [red, fill] (1.5, 0.7) circle [radius=0.03];
\draw [->, domain=245:480] plot ({1.5+0.2*cos(\x)}, {0.7+0.2*sin(\x)});
\draw [blue, rounded corners=3mm] (1, -0.1) -- (1, -0.8) -- (2.5, -0.8) -- (2.5, 0.7)--(1,0.7)-- (1, 0.1); 
\draw [green, rounded corners=4mm] (0, -0.7) -- (-1.5,-0.7) -- (-1.5, 2) -- (0, 2)--cycle;
\draw [red, fill] (0, 0) circle [radius=0.03];
\draw [->, domain=245:480] plot ({0.2*cos(\x)}, {0.2*sin(\x)});
\draw [red, fill] (0, 1.5) circle [radius=0.03];
\draw [->, domain=245:480] plot ({0.2*cos(\x)}, {1.5+0.2*sin(\x)});

\draw [rounded corners=3mm] (4, 0) -- (6.5, 0) -- (6.5, 1.5) -- (4, 1.5)--cycle;
\draw [red, fill] (4+2.5, 0.7) circle [radius=0.03];
\draw [->, domain=245:480] plot ({6.5+0.2*cos(\x)}, {0.7+0.2*sin(\x)});
\draw [blue, rounded corners=3mm] (4+2, -0.1) -- (4+2, -0.8) -- (7.5, -0.8) -- (7.5, 0.7)--(6,0.7)-- (6, 0.1);

\draw [green, rounded corners=3mm] (4.5,1.4) -- (4.5, -0.7) -- (5.5, -0.7) -- (5.5, -0.1); \draw [green, rounded corners=3mm] (5.5, 0.1) -- (5.5, 2) -- (4.5, 2) -- (4.5, 1.6); \draw [red, fill] (5.5, 1.5) circle [radius=0.03]; \draw [->, domain=245:480] plot ({5.5+0.2*cos(\x)}, {1.5+0.2*sin(\x)});\draw [red, fill] (4.5, 0) circle [radius=0.03]; \draw [->, domain=245:480] plot ({4.5+0.2*cos(\x)}, {0.2*sin(\x)});
\draw (5.5, -1.2) node {$\Gamma_2$};
\draw (1, -1.2) node {$\Gamma_1$};
\end{tikzpicture}
\end{center}
\caption{The graphs $\Gamma_j, j=1,2.$}\label{fig:5}
\end{figure}
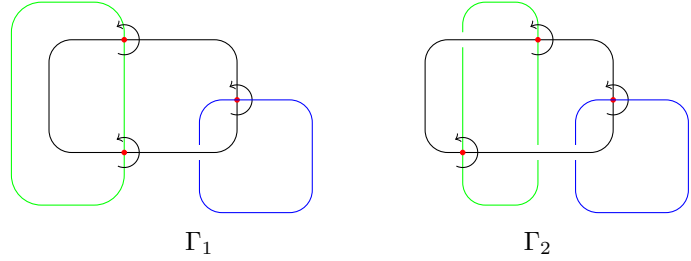
The graph $\Gamma_1$ has three boundary components and $\Gamma_2$ has one boundary component. Therefore, by construction $G=\Gamma_2$ is the unique fat graph which satisfies the theorem. 

\noindent\textbf{(2) Filling quadruple.} To prove the second part, it suffices to show that there exists a unique 4-regular decorated fat graph $G$ with three vertices, four standard cycles and a single boundary component. Suppose $C_i,\ i = 1,2,3,4$, are the standard cycles of $G$. As in the proof of the first part, there are two cases to be considered.

\begin{large}
\noindent Case 1.
\end{large}
Suppose there is a standard cycle consists of three edges. So, without loss of generality, we assume that $C_1$ has length 3. The number of edges in the graph is six. Therefore, it follows that for each $i=2,3,4$, $C_i$ is a loop. There are three vertices on $C_1$ and $C_i, \ i=2,3,4$, are the loops at the vertices on $C_1$. Such a graph, denoted by $H$, is uniquely determined (up to isomorphism) and given in Figure~\ref{fig:6}. The number of boundary components in $H$ is three (see Figure~\ref{fig:6}). Therefore, this case is not possible.
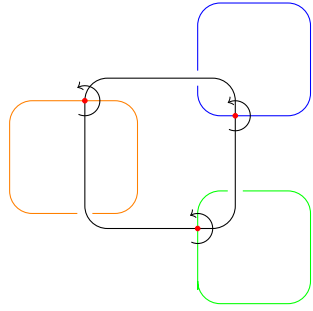
\begin{figure}[htbp]
\begin{center}
\begin{tikzpicture}
\draw [rounded corners=3mm] (0,0) -- (2, 0) -- (2,2) -- (0, 2)--cycle; 
\draw [blue, rounded corners=3mm] (1.5, 1.9) -- (1.5,1.5) -- (3, 1.5) -- (3, 3) -- (1.5, 3) -- (1.5, 2.1); \draw [fill, red] (2, 1.5) circle [radius=0.03]; \draw [->, domain=245:480] plot ({2+0.2*cos(\x)}, {1.5+0.2*sin(\x)}); 
\draw [green, rounded corners=3mm] (1.9, 0.5) -- (1.5, 0.5) -- (1.5, -1) -- (1.5, -1) -- (3, -1) -- (3, 0.5) -- (2.1,0.5); \draw [red, fill] (1.5, 0) circle [radius=0.03]; \draw [->, domain=245:480] plot ({1.5+0.2*cos(\x)}, {0.2*sin(\x)}); \draw [orange, rounded corners=3mm] (0.1, 0.2) -- (0.7, 0.2) -- (0.7, 1.7) -- (-1, 1.7) -- (-1, 0.2) -- (-0.1, 0.2); \draw [red, fill] (0, 1.7) circle [radius=0.03]; \draw [->, domain=245:480] plot ({0.2*cos(\x)}, {1.7+0.2*sin(\x)});

\end{tikzpicture}
\end{center}
\caption{The graph $H$.}\label{fig:6}
\end{figure}

\begin{large}
\noindent Case 2.
\end{large}
In this case, we assume that there is no standard cycle of length three. The only possibility is the following: there are two cycles of length two and two cycles of length one. Such a 4-regular graph $K$ is uniquely determined (up to isomorphism) and given in Figure~\ref{fig:7} which has a single boundary component. Hence, $G=K$ is the unique 4-regular decorated fat graph which satisfies the theorem. 
\begin{figure}[htbp]
\begin{center}
\begin{tikzpicture}
\draw [rounded corners=3mm] (0,0) -- (2.5, 0) -- (2.5, 1.5) -- (0, 1.5)--cycle;
\draw [green, rounded corners=3mm] (1.8, 0.1) -- (1.8, 1) -- (4.3, 1) -- (4.3, -0.5) -- (1.8, -0.5) -- (1.8, -0.1); \draw [red, fill] (0, 1) circle [radius=0.03]; \draw [->, domain=245:480] plot ({0.2*cos(\x)}, {1+0.2*sin(\x)}); \draw [red, fill] (2.5, 1) circle [radius=0.03]; \draw [->, domain=245:480] plot ({2.5+0.2*cos(\x)}, {1+0.2*sin(\x)});
\draw [blue, rounded corners=3mm] (0.7, -0.1) -- (0.7, -0.5) -- (-1.8, -0.5) -- (-1.8, 1) -- (0.7, 1) -- (0.7, 0.1);
\draw [orange, rounded corners=3mm] (3.8, 1.1) -- (3.8, 2) -- (3.8+2.5, 2) -- (3.8+2.5, 0.5) -- (3.8, 0.5) -- (3.8, 0.9); \draw [red, fill] (4.3, 0.5) circle [radius=0.03]; \draw [->, domain=245:480] plot ({4.3+0.2*cos(\x)}, {0.5+0.2*sin(\x)});

\end{tikzpicture}
\end{center}
\caption{The graph $K$.}\label{fig:7}
\end{figure}
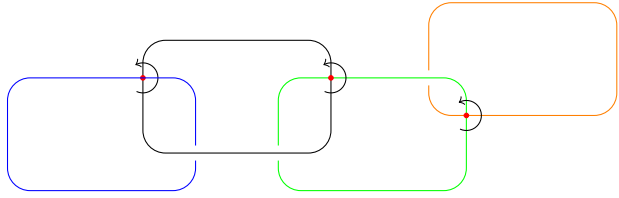

\end{proof}
\begin{remark}
For $n\geq 5$, there does not exist a connected 4-regular decorated fat graph with $n$ standard cycles and three vertices. Hence, there exists no minimal filling $X=\{\gamma_i|\ i=1,2,\dots n\}$ of $F_2$ if $n\geq 5$.
\end{remark}

\begin{proof}[Proof of Theorem~\ref{thm:3}]
Let $(\alpha, \beta)$ be a filling pair of $F_2$ such that the complement is a disjoint union of two  topological discs. Consider the graph $G=\alpha \cup \beta$ on $F_2$. It follows from the Euler characteristic argument that the number of vertices in $G$ is four and the number of edges is eight. The standard cycles of $G$ are the cycles corresponding to the closed curves $\alpha$ and $\beta$. The number of boundary components of $G$ is same as the number of components in the complement of $\alpha\cup\beta$ in $F_2$ which is equal to two.

The proof boils down to showing the existence of a 4-regular graph $G$ as above. Note that, if we have such a graph, we cap the boundary components by topological discs and obtain $F_2$. Consider the graph $G=(E, \sim, \sigma_1, \sigma_0)$ described below (see Figure~\ref{fig:8}).
\begin{enumerate}
\item $E=\{\vec{e}_i, \cev{e}_i, \vec{f}_i, \cev{f}_i| \ i=1,\dots,4\}$.

\item $\sim$ is uniquely determined by the partition $V=\{v_i|i=1,\dots, 4\}$ of $E$, where $v_1=\{\vec{e}_1, \vec{f}_1, \cev{e}_4, \cev{f}_4\}$, $v_2=\{\vec{e}_2, \vec{f}_2, \cev{e}_1, \cev{f}_1\}$, $v_3=\{\vec{e}_3, \cev{f}_3, \cev{e}_2, \vec{f}_4\}$, $v_4=\{\vec{e}_4, \cev{f}_2, \cev{e}_3, \vec{f}_3\}.$

\item $\sigma_1(\vec{e}_i)= \cev{e}_i,\;\; \sigma_1 (\vec{f}_i)= \cev{f}_i, \; i=1,\dots,4.$
 
\item $\sigma_0= (\vec{e}_1, \vec{f}_1, \cev{e}_4, \cev{f}_4)(\vec{e}_2, \vec{f}_2, \cev{e}_1, \cev{f}_1)(\vec{e}_3, \cev{f}_3, \cev{e}_2, \vec{f}_4)(\vec{e}_4, \cev{f}_2, \cev{e}_3, \vec{f}_3).$
\end{enumerate}
\begin{figure}[htbp]
\begin{center}
\begin{tikzpicture}
\draw [rounded corners= 4mm] (2, 1) -- (-2, 1) -- (-2, -1) -- (2, -1) -- cycle;  \draw [red, rounded corners= 4mm] (-1, -0.9) --  (-1,0) -- (-3, 0) -- (-3,-2) -- (3, -2) -- (3, 0) -- (1, 0) -- (1, 0.9); \draw [red, rounded corners= 4mm] (-1, -1.1) -- (-1, -1.6) -- (0, -1.6) -- (0, 1.6) -- (1, 1.6) -- (1, 1.1); \draw [fill] (0,-1) circle [radius=0.04]; \draw [fill] (0,1) circle [radius=0.04]; \draw [fill] (2, 0) circle [radius=0.04]; \draw [fill] (-2, 0) circle [radius=0.04]; \draw [->] (2, 0.70) -- (2, 0.701) node [right] {$\vec{e}_1$}; \draw [->] (2, -0.70) -- (2, -0.701) node [right] {$\cev{e}_4$}; \draw [->] (-1.5, 1) -- (-1.51, 1) node [above] {$\vec{e}_2$}; \draw [->] (-2, -0.70) -- (-2, -0.701) node [left] {$\vec{e}_3$}; \draw [->] (1.5, -1) -- (1.51, -1) node [below] {$\vec{e}_4$}; \draw [->] (-0.5, -1) -- (-0.51, -1) node [above] {$\cev{e}_3$}; \draw [red, ->] (1.51, 0) -- (1.5, 0) node [below] {$\vec{f}_1$}; \draw [red, ->] (2.5, 0) -- (2.51, 0);\draw [red] (3,0) node [right] {$\cev{f}_4$};\draw [red, ->] (0, 0.51) -- (0, 0.5) node [left] {$\vec{f}_2$}; \draw [red, ->] (-0.6, -1.6) -- (-0.651, -1.6);  \draw [red] (-1.2, -1.7) node {$\vec{f}_3$}; \draw [red, ->] (-2.5, 0) -- (-2.51, 0) node [above] {$\vec{f}_4$}; \draw [->] (0.5, 1) -- (0.51, 1) node [below] {$\cev{e}_1$}; \draw [red, ->] (0, 1.3) -- (0, 1.31) node [left] {$\cev{f}_1$}; \draw [red, ->] (-1.41,0) -- (-1.4, 0) node [below] {$\cev{f}_3$}; \draw [->] (-2, 0.6) -- (-2, 0.61) node [right] {$\cev{e}_2$}; \draw [red, ->] (0, -0.51) -- (0, -0.5) node [right] {$\cev{f}_2$};
\end{tikzpicture}
\end{center}
\caption{The graph $G$.}\label{fig:8}
\end{figure}
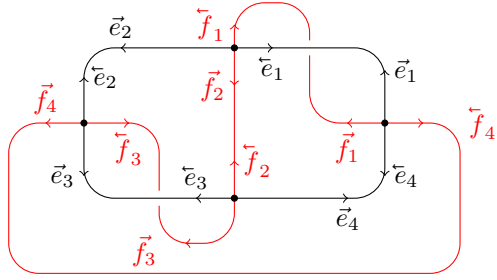
\noindent We have $\sigma_\infty = (\vec{e}_1, \vec{f}_2, \vec{e}_4, \vec{f}_1, \cev{e}_1, \cev{f}_4, \cev{e}_2, \cev{f}_1)(\vec{e}_2, \cev{f}_3, \cev{e}_3, \vec{f}_4, \cev{e}_4, \vec{f}_3, \vec{e}_3, \cev{f}_2)$. Therefore, the number of boundary components in the fat graph $G$ is two (see Lemma~\ref{lem1}).
\end{proof}

\section{Filling pairs of $F_g$}
Before going to the proof of Theorem~\ref{thm:4}, we prove two lemmas which are essential for the proof of the theorem. The lemmas are the particular cases of Theorem~\ref{thm:4} when $g=2$ and $g=3$. 
\begin{lemma}\label{lem:5.3}
For every $k\ (\geq 2) \in \mathbb{Z}$, there exists a filling pair $(\alpha_k^2, \beta_k^2)$ of $F_2$ such that the complement of $(\alpha_k^2 \cup \beta_k^2)$ in $F_2$ is a union of $k$ topological discs.
\end{lemma}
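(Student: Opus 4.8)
The plan is to convert the statement, via the correspondence between fillings of $F_g$ and $4$-regular decorated fat graphs from Section~3, into the problem of writing down one explicit fat graph, and then to read off its number of boundary components from Lemma~\ref{lem:0}. First I would record the Euler characteristic bookkeeping. If $(\alpha,\beta)$ is a filling pair of $F_2$ whose complement is a disjoint union of $k$ discs and $G=\alpha\cup\beta$ is the associated $4$-regular decorated fat graph, then $G$ has exactly two standard cycles and, viewed as the $1$-skeleton of a cell decomposition of $F_2$ with $k$ $2$-cells, satisfies $V-E+k=\chi(F_2)=-2$; together with $E=2V$ this forces $V=i(\alpha,\beta)=k+2$ and $E=2(k+2)$. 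Conversely, any connected $4$-regular decorated fat graph with exactly two standard cycles, $k+2$ vertices and exactly $k$ boundary components produces, after capping the boundary circles with discs, a closed orientable surface of Euler characteristic $(k+2)-2(k+2)+k=-2$, i.e.\ $F_2$, on which the two standard cycles form a filling pair cutting the surface into $k$ discs. Hence the lemma reduces to producing, for each $k\ge 2$, a connected $4$-regular decorated fat graph $G_k$ with two standard cycles, $k+2$ vertices and exactly $k$ boundary components, none of which is a bigon.

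Next I would exhibit such a family $G_k=(E_k,\sim_k,\sigma_1,\sigma_0)$ explicitly, using the graph $G$ of Figure~\ref{fig:8} (which realizes the case $k=2$) as the template. Concretely I would take $E_k=\{\vec e_i,\cev e_i,\vec f_i,\cev f_i\mid 1\le i\le k+2\}$, let $\sigma_1$ be the involution $\vec e_i\mapsto\cev e_i$, $\vec f_i\mapsto\cev f_i$, declare the edges $e_i$ (resp.\ $f_i$) to run $v_i\to v_{i+1}$ with indices mod $k+2$ so that the $e$'s form one standard cycle $\alpha$ and the $f$'s the other standard cycle $\beta$, and then choose the vertex partition $\sim_k$ together with the rotation $\sigma_0$ (a product of $k+2$ four-cycles, one for each vertex) so that at every vertex the four edge-ends alternate between $e$-edges and $f$-edges. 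The guiding principle, read off from Figure~\ref{fig:8}, is to keep a fixed ``head'' block and ``tail'' block of vertices as in that picture and to splice the remaining $k-2$ vertices into a chain following a single repeating local pattern, each inserted vertex being arranged so as to create exactly one new boundary component.

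The hard part will be the verification that $\sigma_\infty=\sigma_1 * \sigma_0^{-1}$ has exactly $k$ orbits and that none of them has length $2$: by Lemma~\ref{lem:0} the first statement says that $\Sigma(G_k)$ has exactly $k$ boundary components, and the second, via the bigon criterion~\cite{FM}, says that the two standard cycles are in minimal position, so that they honestly form a filling pair. The orbit structure of $\sigma_\infty$ is very sensitive to the precise choice of $\sigma_0$, so the repeating local pattern has to be tuned so that the orbit count is $k$ for every $k$ simultaneously, rather than being off by a constant or behaving non-uniformly. I would carry this out by induction on $k$, the base case $k=2$ being Theorem~\ref{thm:3}: one isolates the local modification of $\sigma_\infty$ caused by inserting one more vertex into the chain and checks that it splits a single $\sigma_\infty$-orbit into two while leaving all the other orbits unchanged, so that the number of orbits goes up by exactly one. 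Connectedness of $G_k$ and the fact that it has precisely two standard cycles are immediate from the construction, since the $e$-edges and the $f$-edges each close up into a single ``go straight'' cycle. Once the orbit count is pinned down to $k$, the Euler characteristic computation from the first step guarantees that capping the boundary circles produces $F_2$, and the two standard cycles are the required filling pair $(\alpha_k^2,\beta_k^2)$.
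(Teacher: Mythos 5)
Your reduction is the same as the paper's: Euler characteristic bookkeeping gives $V=k+2$, $E=2(k+2)$, and the lemma becomes the existence of a connected $4$-regular decorated fat graph with two standard cycles, $k+2$ vertices and exactly $k$ boundary components, which is then capped off and counted via Lemma~\ref{lem:0}. The problem is that everything after the reduction is deferred. The entire content of the lemma is the explicit choice of the vertex cycles $\sigma_0$ and the verification that $\sigma_\infty$ has exactly $k$ orbits, and your proposal replaces this with a ``repeating local pattern'' that ``has to be tuned'' so that each inserted vertex splits one $\sigma_\infty$-orbit into two. You never exhibit the pattern, and you never prove the orbit-splitting claim; you explicitly flag it as the hard part and leave it there. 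As written, this is a plan for a proof, not a proof.

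There is also a concrete reason to doubt that the one-vertex-at-a-time induction from the $k=2$ graph of Figure~\ref{fig:8} can be carried out with a single fixed local pattern. Passing from $k$ to $k+1$ changes the number of vertices, i.e.\ $i(\alpha,\beta)$, by one, hence changes its parity; since the surface stays $F_2$ throughout, the homology class of at least one of the two curves must change at each step, and the local combinatorics of the inserted vertex (whether the two $f$-ends at it are ``parallel'' or ``anti-parallel'' relative to the $e$-cycle) cannot simply repeat. The paper sidesteps this by giving two structurally different explicit families, one for $k$ even and one for $k$ odd, each with its own vertex partition and $\sigma_0$, and then listing all orbits of $\sigma_\infty$ by hand in each case to confirm there are exactly $k$. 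To complete your argument you would need either to do the same, or to write down the insertion move precisely (most likely two distinct moves alternating with the parity of $k$) and prove the claimed effect on the orbit structure of $\sigma_\infty$; neither is routine enough to be waved through. Your additional check that no complementary region is a bigon is a reasonable point about minimal position, but it is likewise asserted rather than verified for the unspecified family.
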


\begin{proof} Suppose that there exists a filling pair $(\alpha_k^2, \beta_k^2)$ of $F_2$ which satisfies this lemma. Now, consider the graph $G_k^2:=\alpha_k^2 \cup \beta_k^2$ on $F_2$. It follows from the Euler characteristic argument that $G_k^2$ has $k+2$ vertices and $k$ boundary components. The number of standard cycles in $G_k^2$ is two which correspond to $\alpha_k^2$ and $\beta_k^2.$ Conversely, if we have a fat graph as above, then by attaching topological discs along the boundary components, we obtain a closed surface $F_2$ of genus $2$. The standard cycles provide us with a filling pair for the lemma. Therefore, to prove the lemma, it suffices to construct a graph $G_k^2=(E, \sim, \sigma_1, \sigma_0)$ as above for each $k\geq 2$. We consider the two cases below.

\begin{large}
\noindent Case 1.
\end{large}
Let $k$ be an even integer and $k=2n$ for some $n\in \mathbb{N}$. Then the number of vertices is $2m$, where $m=n+1$. Now, the graph $G^2_k$ is described below (see Figure~\ref{fig:11}).
\begin{enumerate}
\item $E=\{\vec{e}_i, \cev{e}_i, \vec{f}_i, \cev{f}_i| i=1, \dots, 2m\}$.

\item $\sim$ is determined by the partition $V=\{v_i|1\leq i \leq 2m\}$, where 
\begin{align*}
v_1 &= \{\vec{e}_1, \vec{f}_1, \cev{e}_{2m}, \cev{f}_{2m}\},\\ v_i&=\{\vec{e}_i, \vec{f}_i, \cev{e}_{i-1}, \cev{f}_{i-1}\}; \ 2\leq i\leq m\; \text{and} \\v_i&= \{\vec{e}_i, \cev{f}_{3m-i},  \cev{e}_{i-1}, \vec{f}_{3m-i+1}\};\  m+1\leq i \leq 2m.
\end{align*}
\item $\sigma_1(\vec{e}_i)=\cev{e}_i$ and $\sigma_1(\vec{f}_i)=\cev{f}_i,\; i=1, 2, \dots, 2m.$

\item $\sigma_0=C_1\cdots C_{2m}$, where $C_1=(\vec{e}_1, \vec{f}_1, \cev{e}_{2m}, \cev{f}_{2m}), C_i=(\vec{e}_i, \vec{f}_i, \cev{e}_{i-1}, \cev{f}_{i-1})$ for $i=2,\dots, m$ and $C_i= (\vec{e}_i, \cev{f}_{3m-i}, \cev{e}_{i-1}, \vec{f}_{3m-i+1})$ for $i=m+1, \dots, 2m$.
\end{enumerate}

\begin{figure}[htbp]
\begin{center}
\begin{tikzpicture}
\draw [rounded corners=4mm] (0.5, 2) -- (5.5, 2) -- (5.5, -2) -- (0.5, -2); \draw [dotted] (0.5, -2) -- (-0.5, -2); \draw [rounded corners=4mm] (-0.5, -2) -- (-3.5, -2) -- (-3.5, 2) -- (-0.5, 2); \draw [dotted] (-0.5, 2) -- (0.5, 2); 
\draw [red,rounded corners=4mm] (4.5, 1.9) -- (4.5, 0.7) -- (6.5, 0.7) -- (6.5, -4) -- (-4.5, -4) -- (-4.5, -0.7) -- (-2.5, -0.7) -- (-2.5, -1.9); \draw [red,rounded corners=4mm] (4.5, 2.1) -- (4.5, 3) -- (3.5, 3) -- (3.5, 1) -- (2.5, 1) -- (2.5, 1.9); \draw [red,rounded corners=4mm] (2.5, 2.1) -- (2.5, 3) -- (1.5, 3) -- (1.5, 1) -- (0.5, 1) -- (0.5, 1.9); \draw [red,rounded corners=4mm] (-0.5, 2.1) -- (-0.5, 3) -- (-2, 3) -- (-2, 0) -- (4,0) -- (4,-3) -- (2.8, -3) -- (2.8, -2.1); \draw [red, rounded corners=4mm] (2.8, -1.9) -- (2.8, -1) -- (1.6, -1) -- (1.6, -3) -- (0.5, -3) -- (0.5, -2.1); \draw [red, rounded corners=4mm] (-0.5, -1.9) -- (-0.5, -1) -- (-1.5, -1) -- (-1.5, -3) -- (-2.5, -3) -- (-2.5, -2.1);

\draw [green, fill] (5.5,0.7) circle [radius=0.04]; \draw [green, fill] (-3.5,-0.7) circle [radius=0.04]; \draw [green, fill] (3.5,2) circle [radius=0.04]; \draw [green, fill] (1.5,2) circle [radius=0.04]; \draw [green, fill] (-2,2) circle [radius=0.04]; \draw [green, fill] (1.6,-2) circle [radius=0.04]; \draw [green, fill] (4,-2) circle [radius=0.04]; \draw [green, fill] (-1.5,-2) circle [radius=0.04];
\draw [->] (5.5, 1.5) -- (5.5, 1.51) node [left] {$\vec{e}_1$}; \draw [->] (4.2, 2) -- (4.21, 2) node [below] {$\cev{e}_1$}; \draw [->] (2.91, 2) -- (2.9, 2) node [above] {$\vec{e}_2$}; \draw [->] (2.2, 2) -- (2.21, 2) node [below] {$\cev{e}_2$}; \draw [->] (0.91, 2) -- (0.9, 2) node [above] {$\vec{e}_3$}; \draw [->] (-1.01, 2) -- (-1, 2) node [below] {$\cev{e}_{m-1}$}; \draw [->] (-2.9, 2) -- (-2.91, 2) node [above] {$\vec{e}_m$}; \draw [->] (-3.5, 0.2) -- (-3.5, 0.21) node [left] {$\cev{e}_m$};  \draw [->] (-3.5, -1.5) -- (-3.5, -1.51) node [right] {$\vec{e}_{m+1}$};  \draw [->] (-2, -2) -- (-2.01, -2) node [below] {$\cev{e}_{m+1}$}; \draw [->] (-1.01, -2) -- (-1, -2) node [above] {$\vec{e}_{m+2}$}; \draw [->] (2.2, -2) -- (2.21, -2) node [below] {$\vec{e}_{2m-1}$}; \draw [->] (5, -2) -- (5.01, -2) node [below] {$\vec{e}_{2m}$}; \draw [->] (0.81, -2) -- (0.8, -2) node [above] {$\vec{e}_{2m-2}$}; \draw [->] (5.5, -0.2) -- (5.5, -0.21) node [right] {$\cev{e}_{2m}$}; \draw [->] (3.41, -2) -- (3.4, -2) node [above] {$\vec{e}_{2m-1}$};

\draw [red, ->] (4.81, 0.7) -- (4.8, 0.7) node [below] {$\vec{f}_1$}; \draw [red, ->] (6.2, 0.7) -- (6.21, 0.7) node [above] {$\cev{f}_{2m}$}; \draw [red, ->] (4, 3) -- (4.01, 3) node [above] {$\cev{f}_1$}; \draw [red, ->] (3.01, 1) -- (3, 1) node [below] {$\vec{f}_2$}; \draw [red, ->] (2, 3) -- (2.01, 3) node [above] {$\cev{f}_2$}; \draw [red, ->] (1.01, 1) -- (1, 1) node [below] {$\vec{f}_3$};  \draw [red, ->] (-1.21, 3) -- (-1.2, 3) node [above] {$\cev{f}_{m-1}$}; \draw [red, ->] (-2, 1.01) -- (-2, 1) node [left] {$\vec{f}_{m}$}; \draw [red, ->] (4, -1.01) -- (4,-1) node [right] {$\cev{f}_{m}$}; \draw [red, ->] (3.41, -3) -- (3.4,-3) node [below] {$\vec{f}_{m+1}$}; \draw [red, ->] (1.11, -3) -- (1.1,-3) node [below] {$\vec{f}_{m+2}$}; \draw [red, ->] (-2, -3) -- (-2.01,-3) node [below] {$\vec{f}_{2m-1}$}; \draw [red, ->] (-4.1, -0.7) -- (-4.11,-0.7) node [below] {$\vec{f}_{2m}$}; \draw [red, ->] (-2.86, -0.7) -- (-2.85,-0.7) node [above] {$\cev{f}_{2m-1}$}; \draw [red, ->] (-1.01, -1) -- (-1,-1) node [above] {$\cev{f}_{2m-2}$}; \draw [red, ->] (2.2, -1) -- (2.21,-1) node [above] {$\cev{f}_{m+1}$};
\end{tikzpicture}
\end{center}
\caption{The graph $G_k^2$\;($k$ is even).}\label{fig:11}
\end{figure}
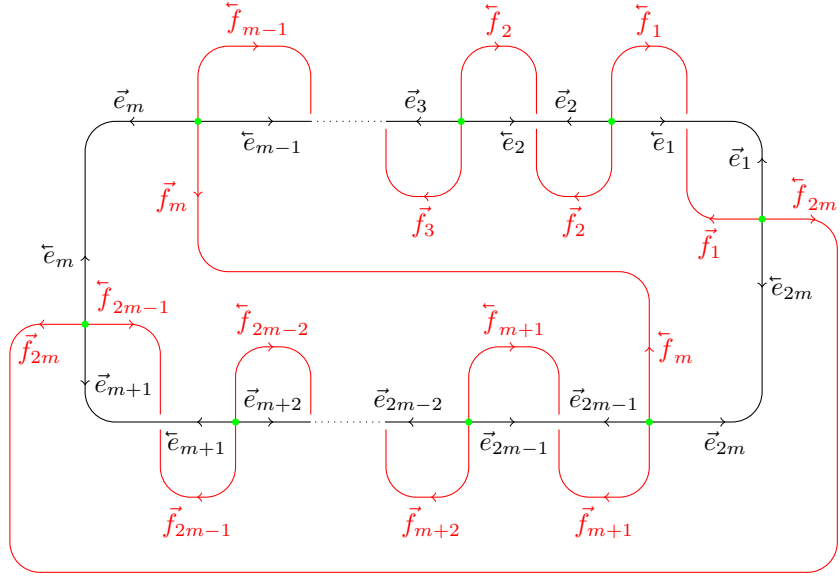

Now, we count the boundary components of $G_k^2$, equivalently orbits of $\sigma_{\infty}.$ The orbits of $\sigma_{\infty}$ are given by 
\begin{eqnarray*}
\partial_i &=& (\vec{e}_i, \vec{f}_{i+1}, \cev{e}_{i+1},  \cev{f}_i)\; \textit{for}\; i=1,\dots, m-2, \\ \partial_{m-1} &=& (\vec{e}_{m-1},\vec{f}_m, \vec{e}_{2m}, \vec{f}_1, \cev{e}_1, \cev{f}_{2m}, \cev{e}_{m}, \cev{f}_{m-1}),\\ \partial_{m}&=&(\vec{e}_m, \cev{f}_{2m-1}, \cev{e}_{m+1}, \vec{f}_{2m}, \cev{e}_{2m}, \vec{f}_{m+1}, \vec{e}_{2m-1}, \cev{f}_m)\; \text{and}\\ \partial_j&=&(\vec{e}_j, \cev{f}_{3m-j-1}, \cev{e}_{j+1}, \vec{f}_{3m-j});\ j=m+1,\dots, 2m-2.
\end{eqnarray*}
So, there are $k$ orbits of $\sigma_{\infty}$.

\begin{large}
\noindent Case 2.
\end{large} In this case, we consider $k=2n+1, n\in \mathbb{N}$. The graph $G_k^2=(E, \sim, \sigma_1, \sigma_0)$ is described below (see Figure~\ref{fig:12}).
\begin{enumerate}
\item $E=\{\vec{e}_i, \cev{e}_i, \vec{f}_i, \cev{f}_i| i=1, \dots, 2n+3\}.$

\item $\sim$ is uniquely determined by the partition $V=\{v_i|i=1,\dots, 2n+3\}$ of $E$, where 
\begin{eqnarray*}
v_1 &=& \{\vec{e}_1, \vec{f}_1, \cev{e}_{2n+3}, \cev{f}_{2n+3} \},\\ v_i &=& \{ \vec{e}_i, \cev{f}_{2n+3-i}, \cev{e}_{i-1}, \vec{f}_{2n+4-i}\};\; i=2,\dots, 2n+2\; \text{and}\\ v_{2n+3} &=& \{\vec{e}_{2n+3}, \vec{f}_{2n+3}, \cev{e}_{2n+2}, \cev{f}_{2n+2}\}.
\end{eqnarray*}

\item $\sigma_1 (\vec{e}_i) = \cev{e}_i$, and $\sigma_1(\vec{f}_i) = \cev{f}_i$ for $ i=1, \dots, 2n+3.$

\item $\sigma_0=C_1C_2\cdots C_{2n+3}$, where $C_1=(\vec{e}_1, \vec{f}_1, \cev{e}_{2n+3}, \cev{f}_{2n+3})$, $C_i =  (\vec{e}_i, \cev{f}_{2n+3-i},\\ \cev{e}_{i-1}, \vec{f}_{2n+4-i})$ for $i=2,\dots, 2n+2$ and $ C_{2n+3}=(\vec{e}_{2n+3}, \vec{f}_{2n+3}, \cev{e}_{2n+2}, \cev{f}_{2n+2})$.

\end{enumerate}
\begin{figure}[htbp]
\begin{center}
\begin{tikzpicture}
\draw [rounded corners=10mm] (-1.8, 2) -- (-4.5,2) -- (-4.5, -2) -- (4.5, -2) -- (4.5, 2) -- (0.2, 2); \draw [dotted] (0.2, 2) -- (-1.8, 2);

\draw [red, rounded corners=2mm] (4.6, -1) -- (5, -1)-- (5,0) -- (-5, 0) -- (-5, 1) -- (-4.6, 1);

\draw [red, rounded corners=2mm] (-4.4, 1) -- (-3.9, 1) -- (-3.9, 3) -- (-3.2,3) -- (-3.2, 2.1);

\draw [red, rounded corners=2mm] (-3.2, 1.9) -- (-3.2, 1) -- (-2.4,1) -- (-2.4, 3) -- (-1.7,3) -- (-1.7, 2.1);
\draw [red, ->] (-2.4, 1.31)--(-2.4, 1.3)node [right] {{\tiny $\cev{f}_3$}};
\draw [red, ->] (-2.4, 2.7)--(-2.4, 2.71)node [left] {{\tiny $\vec{f}_4$}}; \draw [->] (-2.8,2) -- (-2.81,2) node [below] {{\tiny $\vec{e}_{2n}$}}; \draw [->] (-2.1,2) -- (-2,2); \draw (-1.8, 1.8)node {{\tiny $\cev{e}_{2n-1}$}} ;

\draw [red, rounded corners=2mm] (0.1, 1.9) -- (0.1, 1) -- (0.8, 1) -- (0.8, 3) -- (1.5, 3) -- (1.5, 2.1); 
\draw [red, ->] (0.8, 1.15)--(0.8, 1.14); \draw [red] (0.8, 0.7) node {{\tiny $\cev{f}_{2n-1}$}};
\draw [red, ->] (0.8, 2.7)--(0.8, 2.71)node [left] {{\tiny $\vec{f}_{2n}$}}; \draw [->] (0.41,2) -- (0.4,2) node [below] {{\tiny $\vec{e}_4$}}; \draw [->] (1.2,2) -- (1.21,2)node [below] {{\tiny $\cev{e}_3$}} ;

\draw [red, rounded corners=2mm] (1.5, 1.9) -- (1.5, 1) -- (2.3,1) -- (2.3, 3) -- (3.1,3) -- (3.1, 2.1);

\draw [red, ->] (2.3, 1.25)--(2.3, 1.24)node [right] {{\tiny $\cev{f}_{2n}$}};
\draw [red, ->] (2.3, 2.7)--(2.3, 2.71); \draw [red] (2.3, 2.9)node [above] {{\tiny $\vec{f}_{2n+1}$}}; 
\draw [->] (1.91,2) -- (1.9,2) node [below] {{\tiny $\vec{e}_3$}}; 
\draw [->] (2.7,2) -- (2.71,2)node [below] {{\tiny $\cev{e}_2$}} ;

\draw [red, rounded corners=2mm] (3.1, 1.9) -- (3.1, 1) -- (3.8,1) -- (3.8, 3) -- (6,3) -- (6, -3) -- (0, -3) -- (0, -1) -- (4.4, -1);
\draw [red, ->] (3.51, 1)--(3.5, 1) node [below] {{\tiny $\cev{f}_{2n+1}$}}; \draw [red, ->] (3.8, 2.7)--(3.8, 2.71)node [right] {{\tiny $\vec{f}_{2n+2}$}}; \draw [->] (3.41,2) -- (3.4,2) node [below] {{\tiny $\vec{e}_2$}}; \draw [->] (4,1.85) -- (4.1,1.82); \draw (4.4, 1.8)node {{\tiny $\cev{e}_1$}} ; 

\draw [->] (0.5, -2) -- (0.51, -2); \draw (0.9, -1.5)node [below] {{\tiny $\vec{e}_{2n+3}$}};
\draw [->] (-0.5, -2) -- (-0.51, -2); \draw (-0.7, -2.2)node {{\tiny $\cev{e}_{2n+2}$}};
\draw [red, ->] (0, -1.51) -- (0, -1.5)node [left] {{\tiny $\vec{f}_{2n+3}$}};
\draw [red, ->] (0, -2.5) -- (0, -2.51)node [right] {{\tiny $\cev{f}_{2n+2}$}};

\draw [->] (4.5, 0.5) -- (4.5, 0.51)node [right] {{\tiny $\vec{e}_1$}}; \draw [->] (4.5, -0.5) -- (4.5,-0.51)node [left] {{\tiny $\cev{e}_{2n+3}$}}; 

\draw [red, ->] (3.21, 0) -- (3.2,0)node [above] {{\tiny $\vec{f}_1$}}; \draw [red, ->] (5, -0.5) -- (5, -0.51); \draw [red] (5, -0.5) node [right] {{\tiny $\cev{f}_{2n+3}$}};

\draw [->] (-4.5, 0.5) -- (-4.5, 0.51)node [right] {{\tiny $\cev{e}_{2n+1}$}}; \draw [->] (-4.5, -0.5) -- (-4.5,-0.51)node [right] {{\tiny $\vec{e}_{2n+2}$}}; 

\draw [red, ->] (-3.01, 0) -- (-3,0)node [above] {{\tiny $\cev{f}_1$}}; \draw [red, ->] (-5, 0.5) -- (-5, 0.51) node [left] {{\tiny $\cev{f}_2$}};

\draw [->] (-4.2,1.75) -- (-4.21,1.744); \draw (-4.7, 1.8) node {{\tiny $\vec{e}_{2n+1}$}}; \draw [red, ->] (-3.9,1.31)-- (-3.9, 1.3) node [right] {{\tiny $\cev{f}_2$}};  \draw [red, ->] (-3.9, 2.7) -- (-3.9, 2.71)node [left] {{\tiny $\vec{f}_3$}}; \draw [->] (-3.51, 2) -- (-3.5, 2)node [above] {{\tiny $\cev{e}_{2n}$}};
\draw [green, fill] (4.5, 0) circle [radius=0.04]; \draw [green, fill] (-4.5, 0) circle [radius=0.04]; \draw [green, fill] (0, -2) circle [radius=0.04]; \draw [green, fill] (3.81,1.95) circle [radius=0.04]; \draw [green, fill] (2.3,2) circle [radius=0.04]; \draw [green, fill] (0.8,2) circle [radius=0.04]; \draw [green, fill] (-3.89,1.905) circle [radius=0.04]; \draw [green, fill] (-2.4,2) circle [radius=0.04];

\end{tikzpicture}
\end{center}
\caption{The graph $G^2_k$\;($k$ is odd).}\label{fig:12}
\end{figure}
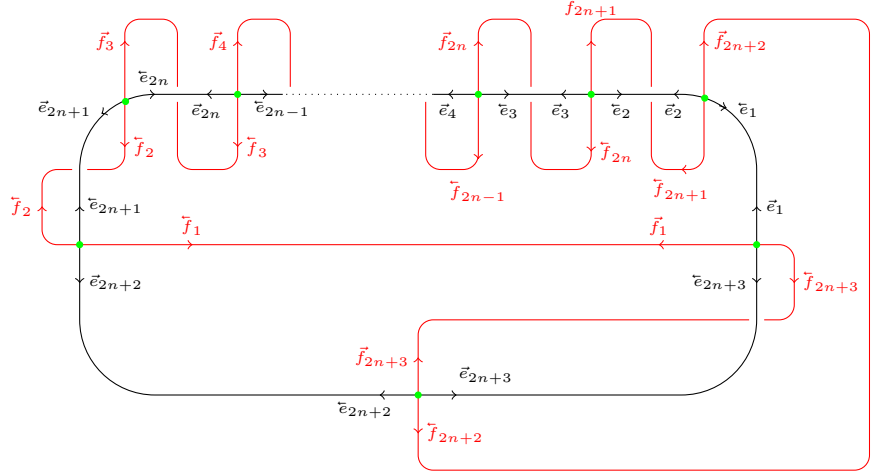
The orbits of $\sigma_{\infty}$ are given by 
\begin{eqnarray*}
D_0&=&(\vec{e}_1, \cev{f}_{2n+1}, \cev{e}_2, \vec{f}_{2n+2}, \cev{e}_{2n+2}, \vec{f}_2, \vec{e}_{2n-1}, \cev{f}_1), \\
D_1&=&(\vec{f}_1, \vec{e}_{2n+2}, \vec{f}_{2n+3}, \cev{e}_{2n+3}, \cev{f}_{2n+2}, \cev{e}_1, \cev{f}_{2n+3}, \vec{e}_{2n+3})
,\\ D_i&=&(\vec{e}_i, \cev{f}_{2k+2-i}, \cev{e}_{i+1}, \vec{f}_{2k+3-i})\;\text{for}\; i=2,\dots, 2n.
\end{eqnarray*}
Hence, the number of orbits of $\sigma_\infty$ is $k$ which is equal to the number of boundary components of the fat graph $G^2_k.$
\end{proof}

\begin{lemma}\label{lem:5.4}
For every $k\ (\geq 1)\in \mathbb{Z}$, there exists a filling pair $(\alpha_k^3, \beta_k^3)$ of $F_3$ such that the complement $F_3\setminus \left(\alpha_k^3\cup \beta_k^3\right)$ is a disjoint union of $k$ topological discs.
\end{lemma}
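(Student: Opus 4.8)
The strategy mirrors the proof of Lemma~\ref{lem:5.3}: translate the statement into the construction of a fat graph and then exhibit one explicitly. Suppose $(\alpha_k^3,\beta_k^3)$ is a filling pair of $F_3$ with complement a union of $k$ discs, and set $G_k^3:=\alpha_k^3\cup\beta_k^3$, viewed as a $4$-regular fat graph on $F_3$ with exactly two standard cycles. If $G_k^3$ has $v$ vertices, then it has $2v$ edges and $k$ faces, so the relation $v-2v+k=\chi(F_3)=-4$ forces $v=k+4$. Conversely, if $G=(E,\sim,\sigma_1,\sigma_0)$ is a connected $4$-regular fat graph with two standard cycles, $k+4$ vertices, and exactly $k$ orbits of $\sigma_\infty$, then capping the $k$ boundary components of $\Sigma(G)$ by discs yields a closed orientable surface of Euler characteristic $(k+4)-2(k+4)+k=-4$, hence of genus $3$, and its two standard cycles form the desired filling pair. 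So it suffices to build such a $G_k^3$ for every $k\ge 1$.

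For $k=1$ this is exactly a minimal filling pair of $F_3$, whose existence is established in~\cite{TS}; alternatively one writes down directly the $4$-regular decorated fat graph on $5$ vertices with two standard cycles and a single boundary component. For $k\ge 2$ I would follow the template of Figures~\ref{fig:11} and~\ref{fig:12}, taking $E=\{\vec e_i,\cev e_i,\vec f_i,\cev f_i\mid i=1,\dots,k+4\}$ with $\sigma_1(\vec e_i)=\cev e_i$ and $\sigma_1(\vec f_i)=\cev f_i$, and choosing the cyclic orderings $\sigma_0=C_1\cdots C_{k+4}$ so that the edges $\vec e_i$ trace out $\alpha_k^3$ as a single simple closed curve, the edges $\vec f_i$ trace out $\beta_k^3$, and the two curves meet transversally at each of the $k+4$ vertices. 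As in Lemma~\ref{lem:5.3} the explicit formulas for $\sigma_0$ and for the vertex partition $\sim$ will split into the cases $k$ even and $k$ odd, and the two extra vertices compared with the genus-$2$ graph $G_k^2$ are precisely what is used to introduce the one additional handle that raises the genus from $2$ to $3$ while leaving the number of complementary discs equal to $k$.

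Having fixed $\sigma_0$, I would compute $\sigma_\infty=\sigma_1*\sigma_0^{-1}$ and exhibit its cycles $\partial_1,\dots,\partial_k$ explicitly; Lemma~\ref{lem:0} then gives that $\Sigma(G_k^3)$ has exactly $k$ boundary components, while connectedness and the fact that there are exactly two standard cycles are read off from the figure. The main obstacle is the same combinatorial bookkeeping as in Lemma~\ref{lem:5.3}: engineering $\sigma_0$ so that simultaneously the underlying graph is connected, there are exactly two standard cycles, and $\sigma_\infty$ has exactly $k$ orbits, and then verifying the last point by a somewhat tedious but routine cycle computation. The genus being exactly $3$ requires no separate argument beyond the Euler characteristic count above.
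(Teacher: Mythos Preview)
Your outline is exactly the paper's approach: reduce to producing a $4$-regular decorated fat graph $G_k^3$ with $k+4$ vertices, two standard cycles, and $k$ orbits of $\sigma_\infty$, split according to the parity of $k$, and verify the boundary count. The Euler-characteristic reduction and the converse (capping off) are fine.

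The gap is that you never carry out the construction. Everything after the reduction is phrased as ``I would follow the template'', ``I would compute $\sigma_\infty$'', ``engineering $\sigma_0$ so that \dots''. But the explicit choice of $\sigma_0$ \emph{is} the proof; nothing guarantees a priori that a $4$-regular fat graph on $k+4$ vertices with two standard cycles and exactly $k$ faces exists, and the genus-$2$ formulas from Lemma~\ref{lem:5.3} do not literally carry over by ``adding two vertices''. The paper supplies, for each parity of $k$, the full data $(E,\sim,\sigma_1,\sigma_0)$ (e.g.\ for $k=2m-1$ it sets $C_1=(\vec e_1,\vec f_1,\cev e_{2m+3},\cev f_{2m+3})$, $C_i=(\vec e_i,\cev f_i,\cev e_{i-1},\vec f_{i+1})$ for $2\le i\le 2m+1$, etc.) and then lists the orbits of $\sigma_\infty$ one by one to check there are $k$ of them. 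Until you actually write down such $C_i$'s and perform that cycle computation, the argument is a plan rather than a proof. Similarly, for $k=1$ you should either cite the precise result or exhibit the five-vertex graph (the paper in fact does the latter, in the example following the lemma).
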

\begin{proof}
The proof of this lemma is similar to the proof of Lemma~\ref{lem:5.3}. We construct $4$-regular fat graph $G^3_k=(E, \sim, \sigma_1, \sigma_0)$ with $k+4$ vertices, two standard cycles and $k$ boundary components. As before, we consider two cases.

\begin{large}
\noindent Case 1.
\end{large}
In this case, we consider $k$ as an odd integer. Let $k=2m-1$ for some $m\in \mathbb{N}$. The graph is described below.  
\begin{enumerate}
\item $E=\{\vec{e}_i, \vec{f}_i, \cev{e}_i, \cev{f}_i| i=1,\dots, 2m+3\}.$

\item $\sim$ is uniquely determined by the partition $V=\{v_i|i= 1,\dots, 2m+3\}$ of $E$, where 
\begin{eqnarray*}
v_1 &=& \{\vec{e}_1, \vec{f}_1, \cev{e}_{2m+3}, \cev{f}_{2m+3}\}, \\  v_i &=& \{\vec{e}_i, \cev{f}_i, \cev{e}_{i-1}, \vec{f}_{i+1}\}; \; i=2,\dots, 2m+1,\\ v_{2m+2} &=& \{\vec{e}_{2m+2}, \cev{f}_1, \cev{e}_{2m+1}, \vec{f}_2\}\; \text{and}\\ v_{2m+3} &=& \{\vec{e}_{2m+3}, \vec{f}_{2m+3}, \cev{e}_{2m+2}, \cev{f}_{2m+2}\}.
\end{eqnarray*}
 
\item $\sigma_1(\vec{e}_i)=\cev{e}_i$ and $\sigma_1(\vec{f}_i)=\cev{f}_i$ for $i=1,\dots, 2m+3$.

\item $\sigma_0=C_1C_2\cdots C_{2m+3}$, where $C_i$'s are the pairwise disjoint cycles given by $C_1=(\vec{e}_1, \vec{f}_1, \cev{e}_{2m+3}, \cev{f}_{2m+3})$, $C_i=(\vec{e}_i, \cev{f}_i, \cev{e}_{i-1}, \vec{f}_{i+1})$ for $i=2,\dots,2m+1$, $C_{2m+2}=(\vec{e}_{2m+2}, \cev{f}_1, \cev{e}_{2m+1}, \vec{f}_2)$ and $C_{2m+3}=(\vec{e}_{2m+3}, \vec{f}_{2m+3}, \cev{e}_{2m+2}, \cev{f}_{2m+2})$. 
\end{enumerate}
Now, we count the number of orbits of $\sigma_\infty$. The orbit of $\sigma_\infty$ containing $\vec{e}_1$ is $(\vec{e}_1, \cev{f}_2, \cev{e}_{2m+1}, \vec{f}_{2m+2}, \cev{e}_{2m+2}, \vec{f}_2, \vec{e}_2, \cev{f}_3, \cev{e}_1, \cev{f}_{2m+3}, \cev{e}_{2m+3}, \vec{f}_1, \vec{e}_{2m+2}, \vec{f}_{2m+3}, \vec{e}_{2m+3}, \\ \cev{f}_{2m+2}, \cev{e}_{2m}, \vec{f}_{2m+1}, \vec{e}_{2m+1}, \cev{f}_1 )$. Now, for each $i=3, \dots, 2m$, the orbit containing $\vec{e}_i$ is $(\vec{e}_i, \cev{f}_{i+1}, \cev{e}_{i-1}, \vec{f}_i)$. These are the all orbits of $\sigma_{\infty}$. Hence, there are $2m-1=k$ distinct orbits of $\sigma_\infty$.

\begin{large}
\noindent Case 2.
\end{large}
Let $k$ be an even integer and $k=2m$ for some $m\in \mathbb{N}$.  The graph is described below.
\begin{enumerate}
\item $E=\{\vec{e}_i, \vec{f}_i, \cev{e}_i, \cev{f}_i|i=1,\dots,2m+4\}$.
\item $\sim $ is determined by the partition $V=\{v_i|i=1,\dots,2m+4\}$, where 
\begin{eqnarray*}
v_1 &=& \{\vec{e}_1, \vec{f}_1, \cev{e}_{2m+4}, \cev{f}_{2m+4}\},\\  v_i &=& \{\vec{e}_i, \cev{f}_i, \cev{e}_{i-1}, \vec{f}_{i+1}\}; i=2,\dots, m+2,\\ v_{m+3} &=& \{\vec{e}_{m+3}, \cev{f}_1, \cev{e}_{m+2}, \vec{f}_2\}\; \text{and}\\ v_i &=& \{\vec{e}_i, \vec{f}_i, \cev{e}_{i-1}, \cev{f}_{i-1}\}; i=m+4, \dots, 2m+4. 
\end{eqnarray*}

\item $\sigma_1(\vec{e}_i)=\cev{e}_i$ and $\sigma_1(\vec{f}_i)=\cev{f}_i$ for $i=1,\dots,2m+4.$ 

\item $\sigma_0=C_1C_2\cdots C_{2m+4}$, where $C_1=(\vec{e}_1, \vec{f}_1, \cev{e}_{2m+4}, \cev{f}_{2m+4})$, $C_i=(\vec{e}_i, \cev{f}_i, \cev{e}_{i-1},\\ \vec{f}_{i+1})$ for $i=2,\dots, m+2$,    $C_{m+3}=(\vec{e}_{m+3}, \cev{f}_1, \cev{e}_{m+2}, \vec{f}_2)$  and $C_i=(\vec{e}_i, \vec{f}_i, \cev{e}_{i-1},\\ \cev{f}_{i-1})$ for $i=m+4, \dots, 2m+4$. 
\end{enumerate}
The orbit containing $\vec{e}_1$ is given by $(\vec{e}_1, \cev{f}_2, \cev{e}_{m+2}, \vec{f}_{m+3}, \cev{e}_{m+3}, \vec{f}_2, \vec{e}_2,\cev{f}_3, \cev{e}_1, \cev{f}_{2m+4},\\ \vec{e}_{2m+4}, \vec{f}_1, \vec{e}_{m+3}, \vec{f}_{m+4}, \cev{e}_{m+4}, \cev{f}_{m+3}, \cev{e}_{m+1}, \vec{f}_{m+2}, \vec{e}_{m+2}, \cev{f}_1)$. For each $i=3, \dots, m+1,$ the orbit containing $\vec{e}_i$ is $(\vec{e}_i, \cev{f}_{i+1}, \cev{e}_{i-1}, \vec{f}_i)$ and for each $i=m+4, \dots, 2m+3,$ the orbit containing $\vec{e}_i$ is given by $(\vec{e}_i, \vec{f}_{i+1}, \cev{e}_{i+1}, \cev{f}_i)$.  Hence, the number of orbits of $\sigma_\infty$ is $2m=k$. 
\end{proof}

\begin{example}\label{eg:(3,1)}
Consider the $4$-regular fat graph $G$ as given in Figure~\ref{fig:g=3}. It has five vertices, one boundary component and two standard cycles. Therefore, the pair of simple closed curves $(\alpha, \beta)$, associated with the standard cycles of $G$, is a minimal filling pair of the closed surface $\Hat{\Sigma}(G)$ which is obtained by capping $\Sigma(G)$. It follows from the Euler characteristic argument that $\Hat{\Sigma}(G)$ is homeomorphic to $F_3$.
\end{example} 

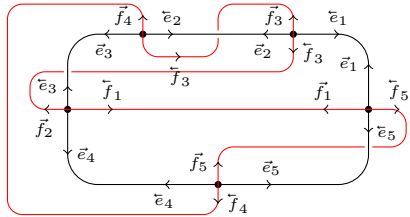
\begin{figure}[htbp]
\begin{tikzpicture}
\draw [rounded corners= 4mm] (2, 1) -- (-2, 1) -- (-2, -1) -- (2, -1) -- cycle; \draw [fill] (0,-1) circle [radius=0.04]; \draw [fill] (1,1) circle [radius=0.04]; \draw [fill] (-1,1) circle [radius=0.04]; \draw [fill] (2, 0) circle [radius=0.04]; \draw [fill] (-2, 0) circle [radius=0.04]; \draw [red, rounded corners = 2mm] (2.05, -0.5) -- (2.5, -0.5) -- (2.5, 0) -- (-2.5, 0) -- (-2.5, 0.5) -- (-2.05, 0.5); \draw [red, rounded corners = 2mm] (-1.95, 0.5) -- (1, 0.5) -- (1, 1.4) -- (0, 1.4) -- (0, 1.05); \draw [red, rounded corners = 2mm] (0, 0.95) -- (0, 0.7) -- (-1, 0.7) -- (-1, 1.4) -- (-2.8, 1.4) -- (-2.8, -1.4) -- (0, -1.4) -- (0, -0.5) -- (1.95, -0.5);  \draw [->] (2, 0.6) -- (2, 0.602) node [left] {\tiny $\vec{e}_1$}; \draw [->] (0.61, 1) -- (0.6,1) node [below]{\tiny $\vec{e}_2$}; \draw [->] (-1.5, 1) -- (-1.51, 1) node [below] {\tiny $\vec{e}_3$}; \draw [->] (-2, -0.6) -- (-2, -0.602) node [right] {\tiny $\vec{e}_4$}; \draw [->] (0.7, -1) -- (0.71, -1) node [above] {\tiny $\vec{e}_5$}; \draw [->] (2, -0.3) -- (2, -0.31) node [right] {\tiny $\cev{e}_5$}; \draw [->] (-0.7, -1) -- (-0.71, -1) node [below] {\tiny $\cev{e}_4$}; \draw [->] (-2, 0.3) -- (-2, 0.31) node [left] { \tiny $\cev{e}_3$}; \draw [->] (-0.61, 1) -- (-0.6,1) node [above] {\tiny $\cev{e}_2$}; \draw [->] (1.6, 1) -- (1.61,1) node [above] {\tiny $\cev{e}_1$}; \draw [->] (1.41, 0) -- (1.4, 0) node [above] {\tiny $\vec{f}_1$}; \draw [->] (-2.3, 0) -- (-2.31, 0) node [below] {\tiny $\vec{f}_2$}; \draw [->] (1, 1.25) -- (1, 1.251) node [left] {\tiny $\vec{f}_3$}; \draw [->] (-1, 1.25) -- (-1, 1.251) node [left] {\tiny $\vec{f}_4$}; \draw [->] (0, -0.71) -- (0, -0.7) node [left] {\tiny $\vec{f}_5$}; \draw [->] (0, -1.25) -- (0, -1.251) node [right] {\tiny $\cev{f}_4$}; \draw [->] (-0.51, 0.7) -- (-0.5, 0.7) node [below] {\tiny $\cev{f}_3$}; \draw [->] (1, 0.751) -- (1, 0.75) node [right] {\tiny $\cev{f}_3$}; \draw [->] (-1.41, 0) -- (-1.4, 0) node [above] {\tiny $\cev{f}_1$}; \draw [->] (2.4, 0) -- (2.41, 0) node [above] {\tiny $\cev{f}_5$};
\end{tikzpicture}

\caption{The graph $G$.}\label{fig:g=3}
\end{figure}

\begin{example}\label{eg:5.1}
In this example, we consider a fat graph $\Gamma=(E, \sim, \sigma_1, \sigma_0)$ given below and compute its boundary components. 
\begin{enumerate}
\item $E=\{\vec{y}_i, \vec{x}_i, \cev{y}_i, \cev{x}_i|\; i=1, \dots, 6\}$.
\item $\sim $ is defined by the partition $U=\{u_i|\; i=1, \dots, 6\}$ of $E$, where $u_1=\{\vec{y}_1, \cev{x}_6, \cev{y}_6, \vec{x}_1\}, u_2=\{\vec{y}_2, \vec{x}_3, \cev{y}_1, \cev{x}_2\}, u_3=\{\vec{y}_3, \vec{x}_2, \cev{y}_2, \cev{x}_1\}, u_4=\{\vec{y}_4, \cev{x}_3, \cev{y}_3, \vec{x}_4\},\\ u_5=\{\vec{y}_5, \vec{x}_6, \cev{y}_4, \cev{x}_5\}$ and $u_6=\{\vec{y}_6, \vec{x}_5, \cev{y}_5, \cev{x}_4\}.$ 
\item $\sigma_1(\vec{x}_i)=\cev{x}_i, \sigma_1(\vec{y}_i)=\cev{y}_i, i=1, \dots, 6$.
\item $\sigma_0=(\vec{y}_1, \cev{x}_6, \cev{y}_6, \vec{x}_1)(\vec{y}_2, \vec{x}_3, \cev{y}_1, \cev{x}_2)(\vec{y}_3, \vec{x}_2, \cev{y}_2, \cev{x}_1)(\vec{y}_4, \cev{x}_3, \cev{y}_3, \vec{x}_4)(\vec{y}_5, \vec{x}_6, \cev{y}_4, \cev{x}_5)\\(\vec{y}_6, \vec{x}_5, \cev{y}_5, \cev{x}_4).$ 
\end{enumerate}
The boundary components of $\Gamma$ are $(\vec{y}_1, \vec{x}_3, \vec{y}_4, \vec{x}_6), (\vec{y}_2, \vec{x}_2, \cev{y}_1, \vec{x}_1, \cev{y}_2, \cev{x}_2, \vec{y}_3, \cev{x}_3),\\ (\vec{y}_5, \vec{x}_5, \cev{y}_4, \vec{x}_4, \cev{y}_5, \cev{x}_5, \vec{y}_6, \cev{x}_6)$ and $(\cev{y}_3, \cev{x}_1, \cev{y}_6, \cev{x}_4)$.
\end{example}
Let $G_k^g$ denote a 4-regular fat graph with two standard cycles satisfying the following.
\begin{enumerate}
\item The number of boundary components is $k$.

\item The surface, obtained by gluing discs along the boundary components, is homeomorphic to $F_g$.
\end{enumerate}
Then the standard cycles of the graph provide us with a filling pair of $F_g$ such that the complement is a disjoint union of $k$ topological discs. It follows from the Euler characteristic argument that the number of vertices in $G_k^g$ is $m=2g-2+k.$
\begin{proof}[Proof of Theorem~\ref{thm:4}]
We prove the theorem by mathematical induction. It is already proved in Lemma~\ref{lem:5.3} and Lemma~\ref{lem:5.4} for the cases when $g=2$ and $g=3$ respectively. Further, for $(g, k)=(3,1)$ the graph $G_k^g$ is given in Example~\ref{eg:(3,1)}.

Suppose $G_k^g=(E, \sim, \sigma_1, \sigma_0)$ is given, we attach the graph $\Gamma$ (see Example~\ref{eg:5.1}) at the vertex $u_1$ with $G_k^g$ at the vertex $v_1$ and obtain $G_k^g\#_{(v_1, u_1)}\Gamma$. Let $E=\{\vec{e}_i, \cev{e}_i, \vec{f}_i, \cev{f}_i|i=1,\dots, 2g-2+k\}$. We label the graph $G_k^g$ such that $v_1=\{\vec{e}_1, \vec{f}_1, \cev{e}_m, \cev{f}_m\}$ is a vertex. Now, the graph $G_k^g\#_{(v_1, u_1)}\Gamma=(E', \sim', \sigma_1', \sigma_0')$ is described below.
\begin{enumerate}
\item $E'=\{\vec{e'}_i, \cev{e}_i', \vec{f}_i', \cev{f'}_i| i=1,2, \dots, m+4\}$, where $\vec{e'}_1=\cev{x}_1*\vec{e}_1$, $$\vec{e'}_i=\vec{e}_i\; \text{for}\; i=2, \dots, m-1,$$ $\vec{e'}_m=\vec{e}_m*\cev{x}_6, \vec{e'}_{m+1}=\cev{x}_5, \vec{e'}_{m+2}=\cev{x}_4, \vec{e'}_{m+3}=\cev{x}_3, \vec{e'}_{m+4}=\cev{x}_2$; $\vec{f}_1'=\vec{y}_6*\vec{f}_1$, $$\vec{f}_i'=\vec{f}_i\; \text{for}\; i=2, \dots, m-1,$$ $\vec{f}_m'=\vec{f}_m*\vec{y}_1, \vec{f}_{m+1}'=\vec{y}_2, \vec{f}_{m+2}'=\vec{y}_3, \vec{f}_{m+3}'=\vec{y}_4$ and $\vec{f}_{m+4}'=\vec{y}_5$.

\item Let $v=\{a_1, a_2, a_3, a_4\}$ be a vertex in $V\cup U\setminus \{v_1, u_1\}$. We define $\tilde{v}=\{a'_1, a'_2, a'_3, a'_4\}$, where
\[  a_j'= \left\{
\begin{array}{ll}
      \cev{x}_1*\vec{e}_1 & \text{if}\;\; a_j\in\{\cev{x}_1, \vec{e}_1\}\\
      a_j & \text{otherwise}\\
\end{array} 
\right. \]
for $j=1,\dots,4$. The set of vertices of $G_k^{g+2}$ is given by $$V'=\{\tilde{v}|v\in V\cup U\setminus \{v_1, u_1\}\}.$$ 
\item $\sigma'_1(\vec{e'}_i)=\cev{e'}_i$ and $\sigma'_1(\vec{f'}_i)=\cev{f'}_i$, where $i=1,2,\dots, m+4$.
\item The permutation $\sigma'_0$ is the product of pairwise disjoint cycles as given below $$\sigma'_0=\prod_{\tilde{v}\in V'}\tilde{v}.$$
\end{enumerate}
Let $\partial$ be a boundary component of $G_k^g$ and $\partial=(a_1, a_2,\dots, a_l)$ which is written uniquely up to cyclic order. We define $\tilde{\partial}$ by following. 

\noindent \textbf{Case 1.} If $a_j\in \{\vec{e}_i, \cev{e}_i, \vec{f}_i, \cev{f}_i|\; i=2, \dots, m-1\}$ for all $j=1, \dots, l$, then we define $\tilde{\partial}=\partial$. 

\noindent \textbf{Case 2.} If $\vec{e}_1$ is in $\partial$, then $\cev{f}_1$ must be in $\partial$ which is counted immediate before $\vec{e}_1$. We replace the subsequence $(\cev{f}_1, \vec{e}_1)$ in $\partial$ by the sequence $(\cev{f}_1*\cev{y}_6, \cev{x}_4, \cev{y}_3, \cev{x}_1*\vec{e}_1)$. Similarly, if $\partial$ contains $(\vec{e}_m, \vec{f}_1), (\vec{f}_m, \cev{f}_m)$ and $(\cev{e}_1, \cev{f}_m)$, then we replace them by $(\vec{e}_m*\cev{x}_6, \vec{y}_5, \vec{x}_5, \cev{y}_4, \vec{x}_4, \cev{y}_5, \cev{x}_5, \vec{y}_6*\vec{f}_1), (\vec{f}_m*\vec{y}_1, \vec{x}_3, \vec{y}_4, \vec{x}_6*\cev{e}_m)$ and $(\cev{e}_1*\vec{x}_1, \cev{y}_2, \cev{x}_2, \vec{y}_3,\\ \cev{x}_3, \vec{y}_2, \vec{x}_2, \cev{y}_1*\cev{f}_m)$ respectively and the obtained new finite sequence of directed edges is defined to be $\tilde{\partial}$.

\noindent Suppose $\{\partial_i| i=1,\dots, k\}$ is the set of boundary components of $G_k^g$. Then $\{\tilde{\partial_i}| i=1,\dots, k\}$ is the set of all boundary components of $G_k^g\#_{(v_1, u_1)}\Gamma.$ Therefore, the number of boundary components in $G_k^g\#_{(v_1, u_1)}\Gamma$ is $k.$ The standard cycles of $G_k^g\#_{(v_1, u_1)}\Gamma$ are $e_1'*e_2'*\cdots*e_{m+4}'$ and $f_1'*f_2'*\cdots*f_{m+4}'$. We define $$G_k^{g+2}:=G_k^g\#_{(v_1, u_1)}\Gamma.$$    
\end{proof}

\end{document}